\documentclass[11pt]{amsart}
\usepackage{amssymb,amsthm,amsmath,amstext}
\usepackage{mathrsfs}  
\usepackage{bm}        
\usepackage{mathtools} 
\usepackage{color}
\usepackage[bbgreekl]{mathbbol}
\usepackage{tabularx}

\usepackage{cite}

\usepackage[left=1.5in,top=1in,right=1.5in,bottom=1in]{geometry}

\usepackage{graphicx}

\usepackage[all]{xy}
\usepackage{tikz}

\newcommand{\xycenter}[1]{
	\begin{center}
	\mbox{\xymatrix{#1}}
	\end{center}
	}

\theoremstyle{plain}
\newtheorem{theorem}{Theorem}[section]

\newtheorem{proposition}[theorem]{Proposition}
\newtheorem{lemma}[theorem]{Lemma}
\newtheorem{corollary}[theorem]{Corollary}


\theoremstyle{definition}
\newtheorem{definition}[theorem]{Definition}

\newtheorem{example}[theorem]{Example}

\theoremstyle{remark}


\newcommand{\C}{\mathbb C}
\renewcommand{\P}{\mathbb P}

\newcommand{\Hom}{\mathrm{Hom}}
\newcommand{\id}{\mathrm{id}}

\newcommand{\Pf}{\mathrm{Pf}}

\usepackage[backref=page]{hyperref}

\newcommand{\GL}{\mathrm{GL}}

\begin{document} 

\title[Formats of $6\times 6$ skew matrices ]{Formats of $6\times 6$ skew matrices of linear forms with vanishing Pfaffian}

\author[B\"ohning]{Christian B\"ohning}\thanks{The first author was supported by the EPSRC New Horizons Grant EP/V047299/1.}
\address{Christian B\"ohning, Mathematics Institute, University of Warwick\\
Coventry CV4 7AL, England}
\email{C.Boehning@warwick.ac.uk}

\author[von Bothmer]{Hans-Christian Graf von Bothmer}
\address{Hans-Christian Graf von Bothmer, Fachbereich Mathematik der Universit\"at Hamburg\\
Bundesstra\ss e 55\\
20146 Hamburg, Germany}
\email{hans.christian.v.bothmer@uni-hamburg.de}

\date{\today}


\begin{abstract}
We show that every skew-symmetric $6 \times 6$ matrix of linear forms with vanishing Pfaffian is congruent to one of finitely many types of matrices, each of which is characterised by a specific pattern of zeroes (and some other linear relations) among its entries. Such matrices are for example important for compactifying moduli spaces of stable rank $2$ vector bundles with Chern classes $c_1=0, c_2=2$  on  cubic threefolds. 
\end{abstract}

\maketitle

In this paper we prove that a skew-symmetric $6 \times 6$ matrix of linear forms with vanishing Pfaffian has at least one of the  skew formats in Table \ref{tFormats}. This result is known by work of Manivel and Mezzetti \cite{MaMe05} for matrices of linear forms whose rank at most $2$ locus is empty. This condition implies, in  particular, that the entries of such a matrix span at most a $\P^2$. Wo do not make any additional assumptions in this paper. 

Our result is also known by work of Comaschi \cite{Co21} if one assumes that the matrix is stable. This condition implies that the span of the entries of such matrices is at least a $\P^3$ and that formats $(a), (b)$ and $(c)$ do not occur. We extend Comaschi's classification to the strictly semistable case in which only format $(e)$ is possible. 

In fact Manivel, Mezzetti and Comaschi prove more: they show that the matrices of each format form an irreducible orbit under an appropriate group and even give a normal form in each case. While it is clear that the matrices of each format form an orbit of a projective space under the $\GL_6(\C)$ operation $A \mapsto SAS^t$ we do not attempt to give normal forms in the general case. We do find normal forms in the strictly semistable case, and also obtain a slight variation of Comaschi's normal forms for stable $M$ from our methods. 

Notice that it is obvious that matrices of the formats given in Table \ref{tFormats} have vanishing Pfaffians. The difficulty lies in proving that there are no others.

\begin{table}[h!]
\begin{tabular}{|c|c|}
\hline
(a) &
$\left(\begin{smallmatrix}
	0 & * & * & * & * &  \\
	* & 0 &* & * & * &  \\
	* & * & 0 &* & * &  \\
	* & * & * & 0& * &  \\
	* & * & * & * & 0 & \\
	\\
\end{smallmatrix}\right)$
\\ \hline
(b) &
$\left(\begin{smallmatrix}
	& * & * & * & * & * \\
	*&&* & * & * & * \\
	* & * & \\
	* & * & \\
	* & * & \\
	* & * & 
\end{smallmatrix}\right)$
\\  \hline
(c) &
$\left(\begin{smallmatrix}
	0 & * & * & * & * & * \\
	* & 0 &* & * &  \\
	* & * & 0 &* &  \\
	* & * & * &0 & \\
	* \\
	*
\end{smallmatrix}\right)$
\\  \hline
\end{tabular}
\quad
\begin{tabular}{|c|c|c|}
\hline
(d) &
$\left(\begin{smallmatrix}
	0 & * & * & \\
	* & 0 &* &\\
	* & * & 0 &  \\
	&&&0 & * & * \\
	&& & * & 0 &* \\
	&& & * & * & 0
\end{smallmatrix}\right)$
\\  \hline
(e) &
$\left(\begin{smallmatrix}
	0 & * & * & 0 & * & *\\
	* & 0 &* & *& 0 & * \\
	* & * & 0 & * & * & 0 \\
	0 & * & * \\
	 * & 0 &* \\
	 * & * & 0 
\end{smallmatrix}\right)$
\\  \hline
(f) &
$\left(\begin{smallmatrix}
	& * &  & 0 & * & *\\
	* &  & & *& 0 & * \\
	 &  &  & * & * & 0 \\
	0 & * & * &  & * & \\
	 * & 0 &* & * &  & \\
	 * & * & 0 &  &  & 
\end{smallmatrix}\right)$
\\\hline
\end{tabular}
\caption{Formats of skew-symmetric $6 \times 6$ matrices of linear forms with vanishing Pfaffian. The last three
are to be understood as double skew formats.}
\label{tFormats}
\end{table}

\section{Formats}

Let $V$ be a finite-dimensional $\C$-vector space, $\P(V)$ the associated projective space. We say that $M$ is a matrix of linear forms over $\P(V)$ if it has entries in $V^*$. 
For any matrix $M$ we denote the entry in row $i$ and column $j$ by  $m_{ij}$. We think of the spaces of rows and columns as projective spaces and therefore let row and column indices start with zero. Our notation for the span of these entries is 
$\langle M \rangle := \langle m_{ij} \rangle\subset V^*$. Notice that this span could be a proper subspace of $V^*$.

For a skew-symmetric matrix $M$ we denote by $M_{ij}$ the matrix obtained by deleting the $i$th and $j$th row and column of $M$. We also denote by $\Pf(M)$ the Pfaffian of $M$ if $M$ is skew. 

We now introduce the language of {\sl formats} of matrices.

\begin{definition}[Formats of matrices]\label{dFormat}
 Let $M$ be an $r \times s$ matrix of linear forms over $\P(V)$ and $F$ an $r \times s$ matrix with entries $0$ or $*$. 
\begin{enumerate}
\item We say that $M$ is of the {\sl form}, or  has {\sl form}, $F$ if $m_{ij}$ is $0$ whenever $f_{ij}$ is $0$. In particular, $M$ may also have $0$ entries where $F$ has $*$ entries. 
\item  We say that $M$ has {\sl format} $F$ if there exist matrices $S\in \GL_{r}(\C)$ and $T\in \GL_{s} (\C )$ such that $SM T$ hasr form $F$.
\item If $r=s$ and $M$ is skew, we say that $M$ has {\sl skew format} $F$ if there exists $S \in \GL_r (\C )$ such that $S M S^t$ has form $F$.
\end{enumerate}
Sometimes we replace a $0$ by a blank space if we feel that this improves the readability.
\end{definition}

\begin{example}
$M = \left(\begin{smallmatrix} x & 0 \\ y & 0 \end{smallmatrix}\right)$ has {\sl form} $ \left(\begin{smallmatrix} * & 0 \\ * & 0 \end{smallmatrix}\right)$. $M' = \left(\begin{smallmatrix} x & x\\ y & y \end{smallmatrix}\right)$ has {\sl format} $ \left(\begin{smallmatrix} * & 0 \\ * & 0 \end{smallmatrix}\right)$, since it can be transformed into $M$ by row and column operations. 
\end{example}

\begin{definition}
In this paper we consider $6 \times 6$ skew matrices $M$ of linear forms over a projective space $\P(V)$. In this special situation we say that $M$ is {\sl double skew} if there are skew symmetric $3 \times 3$ matrices of linear forms $N_0,N_1,N_2$ over $\P(V)$ such that
\[
	M = \begin{pmatrix} N_0 & N_1 \\ -N_1^t & N_2 \end{pmatrix} =
	  \begin{pmatrix} N_0 & N_1 \\ N_1 & N_2 \end{pmatrix}
\]
We say that $M$ is of {\sl double skew format}  $F$ if there exists a matrix $S \in \GL_6 (\C )$ such that $S M S^t$ is double skew and has form $F$.
\end{definition}

For future reference we recall the analogue of Laplace expansion for Pfaffians. 

\begin{proposition}\label{pPfaffianLaplace}
Let $M$ be a skew-symmetric $2n\times 2n$-matrix (with entries in some commutative ring), and $i\in \{0, \dots 2n-1\}$ fixed.
Then
\[
\Pf (M)=\sum_{j\neq i}(-1)^{i+j+1+\theta(i, j)}m_{ij}\Pf(M_{ij})
\]
where $\theta (i, j)= 0$ if $i\le j$ and $1$ otherwise. 
\end{proposition}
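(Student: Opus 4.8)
The plan is to work from the combinatorial description of the Pfaffian as a signed sum over perfect matchings, prove the expansion first in the special case $i=0$, and then reduce the general statement to that case by a simultaneous row-and-column permutation. Recall that for a skew-symmetric $2n\times 2n$ matrix one has
\[
\Pf(M)=\sum_{\mu}\mathrm{sgn}(\mu)\prod_{\{a,b\}\in\mu}m_{ab},
\]
the sum ranging over perfect matchings $\mu$ of $\{0,\dots,2n-1\}$, where each unordered pair is written with $a<b$ and $\mathrm{sgn}(\mu)$ is the sign of the permutation carrying $(0,1,\dots,2n-1)$ to the sequence obtained by concatenating the pairs of $\mu$, each written increasingly and listed in the order of their smaller elements.

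First I would treat $i=0$, where $\theta(0,j)=0$ and the claim reads $\Pf(M)=\sum_{j=1}^{2n-1}(-1)^{j+1}m_{0j}\Pf(M_{0j})$. In every matching $\mu$ the index $0$ is paired with a unique $j\ge 1$, and since $0$ is the smallest element the pair $\{0,j\}$ comes first; thus $\mu=\{0,j\}\sqcup\mu'$ with $\mu'$ a matching of $\{1,\dots,2n-1\}\smallsetminus\{j\}$, and the product factors as $m_{0j}\prod_{\mu'}m$. Comparing $\mathrm{sgn}(\mu)$ with $\mathrm{sgn}(\mu')$ reduces to counting the inversions created by placing $j$ in the second slot, namely the $j-1$ elements $1,\dots,j-1$ lying to its right, so that $\mathrm{sgn}(\mu)=(-1)^{j-1}\mathrm{sgn}(\mu')$. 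Since the surviving index set of $M_{0j}$ is $\{1,\dots,2n-1\}\smallsetminus\{j\}$ in its natural order, the matchings $\mu'$ reproduce exactly the terms of $\Pf(M_{0j})$, and summing over $j$ gives the case $i=0$.

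For general $i$ I would apply the transformation law $\Pf(PMP^t)=\det(P)\,\Pf(M)$ to the permutation matrix $P$ of the cycle $\tau=(0\;1\;\cdots\;i)$, which moves $i$ to the front, shifts $0,\dots,i-1$ up by one, and fixes $i+1,\dots,2n-1$; here $\det(P)=\mathrm{sgn}(\tau)=(-1)^i$. Writing $M'=PMP^t$, expansion of $M'$ along its first row (the case already proved) expresses $\Pf(M')$ through $m'_{0k}=m_{i,\tau^{-1}(k)}$ and $\Pf(M'_{0k})$. The decisive point is that $\tau^{-1}$ restricted to $\{1,\dots,2n-1\}$ is order-preserving, so the minor $M'_{0k}$ coincides with $M_{i,\tau^{-1}(k)}$ under an order-preserving relabeling and hence $\Pf(M'_{0k})=\Pf(M_{i,\tau^{-1}(k)})$ with no extra sign. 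Substituting $j=\tau^{-1}(k)$ and combining the factor $(-1)^i$ with the first-row sign $(-1)^{k+1}$, one splits into the cases $j<i$ (where $k=j+1$) and $j>i$ (where $k=j$); in both the total sign collapses to $(-1)^{i+j+1+\theta(i,j)}$, which is the assertion.

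The only genuine obstacle is the sign bookkeeping: one must check that identifying the deleted minor introduces no spurious sign — this is precisely why $\tau$ is chosen to act order-preservingly away from $i$ — and that the two parity computations for $j<i$ and $j>i$ both collapse to the single closed formula with the correction term $\theta(i,j)$. Everything else is routine.
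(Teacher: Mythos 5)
Your proof is correct, but it takes a genuinely different route from the paper: the paper does not prove this proposition at all, it simply cites the identity as equation (D.1) on p.~116 of Fulton--Pragacz \cite{FP98}. What you provide is a self-contained combinatorial argument from the perfect-matching definition of the Pfaffian, and the two key steps both check out. In the case $i=0$, splitting off the pair $\{0,j\}$ and counting the $j-1$ inversions created by moving $j$ into the second slot gives $\mathrm{sgn}(\mu)=(-1)^{j-1}\mathrm{sgn}(\mu')$, which is exactly the sign $(-1)^{j+1}$ in the statement. In the reduction step, your choice of the cycle $\tau=(0\,1\,\cdots\,i)$ is the right one precisely because $\tau^{-1}$ is order-preserving on $\{1,\dots,2n-1\}$, so $\Pf(M'_{0k})=\Pf(M_{i,\tau^{-1}(k)})$ with no hidden sign, and the final bookkeeping works: for $j<i$ one gets $(-1)^i(-1)^{(j+1)+1}=(-1)^{i+j}=(-1)^{i+j+1+\theta(i,j)}$ with $\theta(i,j)=1$, and for $j>i$ one gets $(-1)^i(-1)^{j+1}=(-1)^{i+j+1}$ with $\theta(i,j)=0$. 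What your approach buys is independence from the literature and an argument valid over any commutative ring directly from the definition; what it costs is reliance on the transformation law $\Pf(PMP^t)=\det(P)\Pf(M)$, which is itself a standard fact you would need to either cite or prove (for permutation matrices it can also be derived from the matching description), whereas the paper's citation settles the whole statement in one stroke.
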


\begin{proof}
See \cite[equation (D.1) p. 116]{FP98}.
\end{proof}

\section{proof of the Theorem}

\begin{figure}
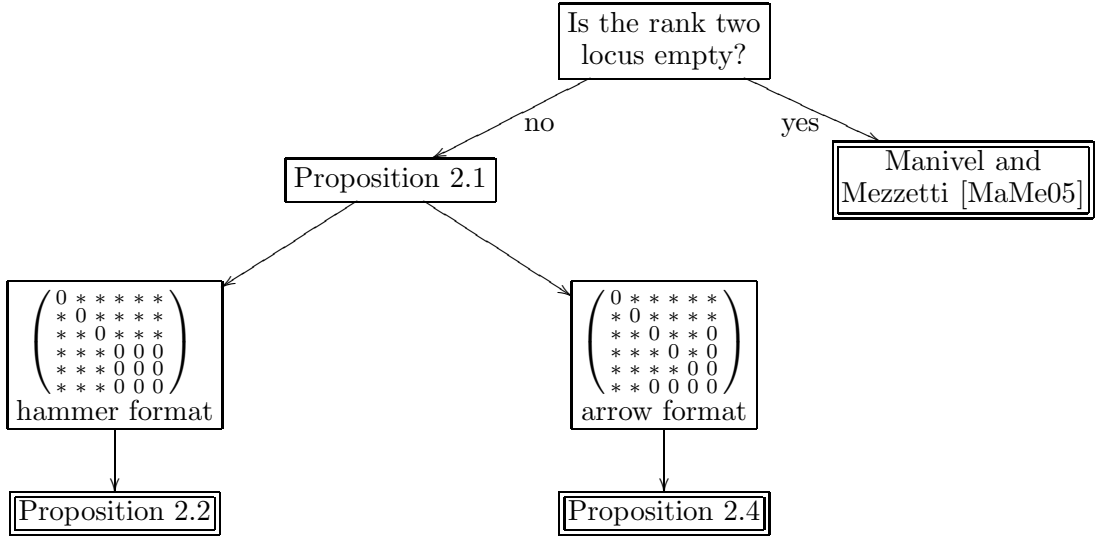

\xycenter{
	&&*+[F]\txt{Is the rank two\\ locus empty?} \ar[dl]^{\txt{no}}  \ar[dr]_{\txt{yes}} 
	\\
	&*+[F]\txt{Proposition \ref{pRank2}}
	\ar[dl]
	\ar[dr]
	&&*+[F=]\txt{Manivel and \\  Mezzetti \cite{MaMe05}}
	 \\
	*+[F]\txt{\text{$
	\left(
	\begin{smallmatrix} 
	0 & * & * & * & * & *\\
	* & 0 & * & * & * & * \\
	* & * & 0 & * & * & * \\
	* & * & * & 0 & 0& 0 \\
	* & * & * & 0 & 0 & 0 \\
	*&  * & * & 0 & 0 & 0 \\
	\end{smallmatrix}
	\right)$ } \\ hammer format}
		\ar[d]
	&&
	*+[F]\txt{\text{$
	\left(
	\begin{smallmatrix} 
	0 & * & * & * & * & *\\
	* & 0 & * & * & * & * \\
	* & * & 0 & * & * & 0 \\
	* & * & * & 0 & * & 0 \\
	* & * & * & * & 0 & 0 \\
	*&  * & 0 & 0 & 0 & 0 \\
	\end{smallmatrix}
	\right)$}\\ arrow format}
	\ar[d]
	\\
	*+[F=]\txt{Proposition \ref{pHammerFormat}}
	&&*+[F=]\txt{Proposition \ref{pArrowFormat}}
}
\caption{Flowchart for the proof of Theorem \ref{tMain}} \label{fFlowThm}
\end{figure}

In this section we prove our main Theorem. The structure of the proof is as follows: If the matrix $M$ has no rank $2$ points the Theorem follows from work of  Manivel and Mezzetti  \cite{MaMe05}. If it does have a rank $2$ point we can prove that $M$ has one of two special formats. For each of these formats we prove the Theorem in a separate proposition. See Figure \ref{fFlowThm}
for a flow chart of this proof.

\begin{proposition} 

\label{pRank2}

Let $M$ be a skew-symmetric $6 \times 6$ matrix with vanishing Pfaffian and 
a rank $2$ point. Then $M$ has one of the following formats
\[
\left(\begin{smallmatrix} 
	0 & * & * & * & * & *\\
	* & 0 & * & * & * & * \\
	* & * & 0 & * & * & * \\
	* & * & * & 0 & 0& 0 \\
	* & * & * & 0 & 0 & 0 \\
	*&  * & * & 0 & 0 & 0 \\
	\end{smallmatrix}\right)
	\quad
\left(\begin{smallmatrix} 
	0 & * & * & * & * & *\\
	* & 0 & * & * & * & * \\
	* & * & 0 & * & * & 0 \\
	* & * & * & 0 & * & 0 \\
	* & * & * & * & 0 & 0 \\
	*&  * & 0 & 0 & 0 & 0 \\
	\end{smallmatrix}\right)
\]
Because of the shape of the zeros, we call the first format {\sl hammer format} and the second one {\sl arrow format}.
\end{proposition}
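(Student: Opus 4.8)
The plan is to use the rank $2$ point to put $M$ in a convenient position, to peel off a single $4\times 4$ block on which the vanishing of the Pfaffian concentrates, and then to classify that block via the Pl\"ucker quadric. First I would exploit the point $p$ where $\mathrm{rk}\,M(p)=2$. Since every skew form over $\C$ is congruent to a sum of hyperbolic planes and a zero block, after replacing $M$ by $SMS^t$ for a suitable $S\in\GL_6(\C)$ I may assume that $M(p)$ is the standard rank $2$ matrix whose only nonzero entries sit in positions $(0,1)$ and $(1,0)$. Choosing coordinates $x_0,\dots,x_n$ on $V^*$ with $p=[1:0:\dots:0]$, each entry splits as $m_{ij}=m_{ij}(p)\,x_0+\ell_{ij}$ with $\ell_{ij}\in\langle x_1,\dots,x_n\rangle$; by the normalisation $m_{ij}(p)=0$ unless $\{i,j\}=\{0,1\}$, so the variable $x_0$ occurs only in the entry $m_{01}$.

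Now I expand $\Pf(M)$ along row $0$ using Proposition \ref{pPfaffianLaplace}. Only the term $m_{01}\Pf(M_{01})$ involves $m_{01}$, and none of the minors $\Pf(M_{0j})$ contains $x_0$, since their entries lie in rows and columns that vanish at $p$. Hence $\Pf(M)$ is a polynomial of degree $\le 1$ in $x_0$ whose coefficient of $x_0$ equals $\Pf(M_{01})$. As $\Pf(M)\equiv 0$, I conclude $\Pf(M_{01})=0$; that is, the lower right $4\times 4$ block $N:=M_{01}$ on rows and columns $2,3,4,5$ has identically vanishing Pfaffian. (The remaining relation obtained from the $x_0^0$-part is not needed for the format statement.)

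The key step is to classify such a block. Writing $N=\sum_k x_k C_k$ with constant skew $4\times 4$ matrices $C_k$, the identity $\Pf(N)\equiv 0$ says that the Pfaffian quadratic form on $\Lambda^2\C^4\cong\P^5$ vanishes on the span $\langle C_k\rangle$; by polarisation this span is totally isotropic, so its projectivisation is a linear subspace contained in the Pl\"ucker quadric $G(2,4)\subset\P^5$. Every such subspace extends to a maximal one, and the maximal linear subspaces of the smooth quadric fourfold $G(2,4)$ are exactly the two rulings by planes: the $\alpha$-planes $\{v\wedge w:w\in\C^4\}$ for a fixed vector $v$, and the $\beta$-planes $\{v\wedge w:v,w\in H\}$ for a fixed $3$-dimensional $H\subset\C^4$. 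In the first case $N=v\wedge\omega$ for a vector $\omega$ of linear forms; in the second $N$ takes all its values in $\Lambda^2 H$.

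I would finish by normalising $N$ through a congruence $I_2\oplus S_4$ with $S_4\in\GL_4(\C)$, which fixes the top left $2\times 2$ block, hence preserves the standard form of $M(p)$, and sends $N\mapsto (S_4v)\wedge(S_4\omega)$ in the $\alpha$-case and into $\Lambda^2(S_4H)$ in the $\beta$-case. Choosing $S_4$ with $S_4v=e_2$ concentrates all nonzero entries of $N$ in a single row and column, forcing $m_{34}=m_{35}=m_{45}=0$: this is the hammer format. Choosing $S_4$ with $S_4H=\langle e_2,e_3,e_4\rangle$ makes row and column $5$ of $N$ vanish, forcing $m_{25}=m_{35}=m_{45}=0$ with the block on $2,3,4$ arbitrary: this is the arrow format. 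The degenerate possibilities (span a line or a point, or $N\equiv 0$) lie in a maximal plane and hence satisfy one of the two formats a fortiori. I expect the main obstacle to be precisely this $4\times 4$ classification, namely identifying $\Pf(N)\equiv 0$ with membership in the Grassmannian quadric and tracking its two rulings of planes through the congruence; the reduction to $N$ and the final bookkeeping are routine.
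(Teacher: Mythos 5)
Your proposal is correct and takes essentially the same route as the paper: normalise at the rank $2$ point so that the distinguished coordinate occurs only in $m_{01}$, use the Pfaffian Laplace expansion to conclude $\Pf(M_{01})=0$, and then classify the resulting $4\times 4$ skew block with identically vanishing Pfaffian to get the hammer and arrow formats. The only difference is that you re-derive that $4\times 4$ classification inline (isotropy by polarisation, then the two rulings of planes on $G(2,4)$), whereas the paper cites it from its appendix --- where it is proved by exactly the same Pl\"ucker-quadric argument.
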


\begin{proof}
Since there exist a rank $2$ point we can assume that $M$ is of the form
\[
	M = 
	\left(\begin{smallmatrix} 
	0 & a & 0& 0 & 0 & 0\\
	a & 0 & 0& 0 & 0 & 0 \\
	0 & 0 & 0 & 0 & 0 & 0 \\
	0 & 0 & 0 & 0 & 0& 0 \\
	0 & 0 & 0 & 0 & 0 & 0 \\
	0 & 0 & 0 & 0 & 0 & 0 \\
	\end{smallmatrix}\right)
	+ N
\]
with $a \in V^*$ a non zero linear form and $a \not\in \langle n_{01},\dots ,n_{45} \rangle$. By Proposition \ref{pPfaffianLaplace}, it follows
that
\[
	0 = \Pf(M) = a \Pf(N_{01}) + \Pf(N)
\]
and hence $\Pf(N_{01}) = 0$. From the Appendix we have that $N_{01}$ has
one of the following formats
\[
\left(\begin{smallmatrix} 0 & * & * & * \\  * & 0& 0 & 0 \\  * & 0& 0 & 0 \\  * & 0& 0 & 0 \end{smallmatrix}\right) \quad
\left(\begin{smallmatrix}  0 & * & * & 0 \\ * & 0 & * & 0 \\ * & * & 0 & 0   \\ 0 & 0 & 0 & 0   \end{smallmatrix}\right).
\]
Since $N_{01} = M_{01}$ we obtain that $M$ is of format
\[
\left(\begin{smallmatrix} 
	0 & * & * & * & * & *\\
	* & 0 & * & * & * & * \\
	* & * & 0 & * & * & * \\
	* & * & * & 0 & 0& 0 \\
	* & * & * & 0 & 0 & 0 \\
	*&  * & * & 0 & 0 & 0 \\
	\end{smallmatrix}\right)
\]
in the first case and of format
\[
\left(\begin{smallmatrix} 
	0 & * & * & * & * & *\\
	* & 0 & * & * & * & * \\
	* & * & 0 & * & * & 0 \\
	* & * & * & 0 & * & 0 \\
	* & * & * & * & 0 & 0 \\
	*&  * & 0 & 0 & 0 & 0 \\
	\end{smallmatrix}\right)
\]
in the second case.
\end{proof}

We start by analysing matrices that have hammer format:

\begin{proposition} 

\label{pHammerFormat}

Consider a skew $6 \times 6$ matrix $M$ of linear form with hammer format
\[
	\left(\begin{smallmatrix} 
	0 & * & * & * & * & *\\
	* & 0 & * & * & * & * \\
	* & * & 0 & * & * & * \\
	* & * & * & 0 & 0& 0 \\
	* & * & * & 0 & 0 & 0 \\
	*&  * & * & 0 & 0 & 0 \\
	\end{smallmatrix}\right)
\]
and vanishing Pfaffian. Then $M$ has format $(a)$, $(b)$, $(c)$ or $(e)$ in Table \ref{tFormats}.
\end{proposition}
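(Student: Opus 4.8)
The plan is to reduce the statement to a classification of the single off-diagonal $3\times 3$ block. Writing $M$ in the $3+3$ block form dictated by the hammer format,
\[
M=\begin{pmatrix} N_0 & B \\ -B^t & 0\end{pmatrix},
\]
where $N_0$ is a skew $3\times 3$ matrix of linear forms and $B$ is an arbitrary $3\times 3$ matrix of linear forms, I would first compute the Pfaffian. Expanding $\Pf(M)$ as a signed sum over perfect matchings of $\{0,\dots,5\}$ and using that $m_{34}=m_{35}=m_{45}=0$ (the bottom-right block vanishes), every matching that pairs two indices inside $\{3,4,5\}$ drops out. The surviving matchings are exactly the bijections $\{3,4,5\}\to\{0,1,2\}$, whose total contribution is, up to one global sign, $\det B$. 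Hence $\Pf(M)=\pm\det B$, and the hypothesis $\Pf(M)=0$ becomes the single condition $\det B\equiv 0$; equivalently, every matrix in the image of the linear map $v\mapsto B(v)$ has rank at most $2$.

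Next I would record which congruences preserve the block shape. For $S=\operatorname{diag}(P,Q)$ with $P,Q\in\GL_3(\C)$ one has
\[
SMS^t=\begin{pmatrix} PN_0P^t & PBQ^t \\ -QB^tP^t & 0\end{pmatrix},
\]
so $Q$ performs arbitrary column operations on $B$ and leaves $N_0$ untouched, while $P$ performs row operations on $B$ and simultaneously a congruence on the still-skew block $N_0$. The crux is the classification of a $3\times 3$ matrix $B$ of linear forms with $\det B\equiv 0$ up to the equivalence $B\mapsto PBQ$: such a $B$ can be brought to one of four shapes — a matrix with a zero column, a matrix with a zero row, a matrix with a $2\times 2$ block of zeros, or a skew-symmetric matrix. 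This is exactly the statement that a space of $3\times3$ matrices of rank $\le 2$ is either a compression space (for corank $1$ this forces a $p\times q$ zero block with $p+q\ge 4$, i.e. a $1\times 3$, $3\times 1$, or $2\times 2$ block) or is equivalent to the $3$-dimensional space of skew matrices. I would invoke the classification supplied in the Appendix (the analogue of the $4\times 4$ skew case already used in the proof of Proposition \ref{pRank2}), and this is the step I expect to carry all the real weight.

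Finally I would match the four shapes to the four formats by direct substitution into the block form above, realising the relevant normal form of $B$ by the congruence $S=\operatorname{diag}(P,Q)$. A zero column of $B$ makes a column (and the matching row) of $M$ among the indices $3,4,5$ vanish; after reindexing this is column $5$, leaving a $5\times 5$ skew matrix, i.e. format $(a)$. A zero row of $B$ confines all nonzero entries of $M$ to rows and columns $0,1$, producing the $4\times 4$ zero corner of format $(b)$. A $2\times 2$ zero block of $B$ gives precisely the zero pattern in which only index $0$ meets indices $4,5$, which is format $(c)$. In the remaining case $PBQ^t$ is skew-symmetric, so $SMS^t$ has a skew off-diagonal block and zero bottom-right block; it is therefore double skew with $N_2=0$, which is format $(e)$. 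Since these four possibilities are exhaustive, $M$ has one of the formats $(a),(b),(c),(e)$, as claimed.
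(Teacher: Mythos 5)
Your proposal is correct and takes essentially the same route as the paper's own proof: the same block decomposition $M=\left(\begin{smallmatrix} N_0 & B \\ -B^t & 0\end{smallmatrix}\right)$, the identity $\Pf(M)=\pm\det B$, the Appendix classification (Testa) of $3\times 3$ matrices of linear forms with identically vanishing determinant into zero column, zero row, $2\times 2$ zero block, or skew, and the same matching of these four cases to formats $(a)$, $(b)$, $(c)$, $(e)$. The only difference is that you make explicit two points the paper leaves implicit, namely the perfect-matching expansion giving $\Pf(M)=\pm\det B$ and the congruence $\operatorname{diag}(P,Q)$ realising $B\mapsto PBQ^t$ while preserving the block shape (the latter being what legitimises passing from a format statement about $B$ to a skew/double-skew format statement about $M$).
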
	

\begin{proof}
In the situation of the Proposition we have two $3 \times 3$ matrices of linear forms
$N$ and $N'$ such that
\[
	M = \begin{pmatrix} N' & N \\ -N^t & 0 \end{pmatrix}.
\]
We then have
\[
	0 = \Pf(M) = \det(N)
\]
From the Appendix we obtain that $N$ must have one of the following formats
\[
\left(\begin{smallmatrix} * & * & 0 \\  * & * & 0\\ * & * & 0\end{smallmatrix}\right) \quad
\left(\begin{smallmatrix} * & * & * \\  * & * &  *\\ 0 & 0 & 0\end{smallmatrix}\right) \quad
\left(\begin{smallmatrix} * & * & * \\  * & 0 & 0 \\  *& 0 & 0\end{smallmatrix}\right) \quad
\left(\begin{smallmatrix} 0 & * & * \\  * & 0 & * \\ * &*  &0 \end{smallmatrix}\right) 
\]
the last one being skew. In these cases $M$ as format $(a)$, $(b)$, $(c)$ or $(e)$ respectively. 
\end{proof}

\begin{figure}
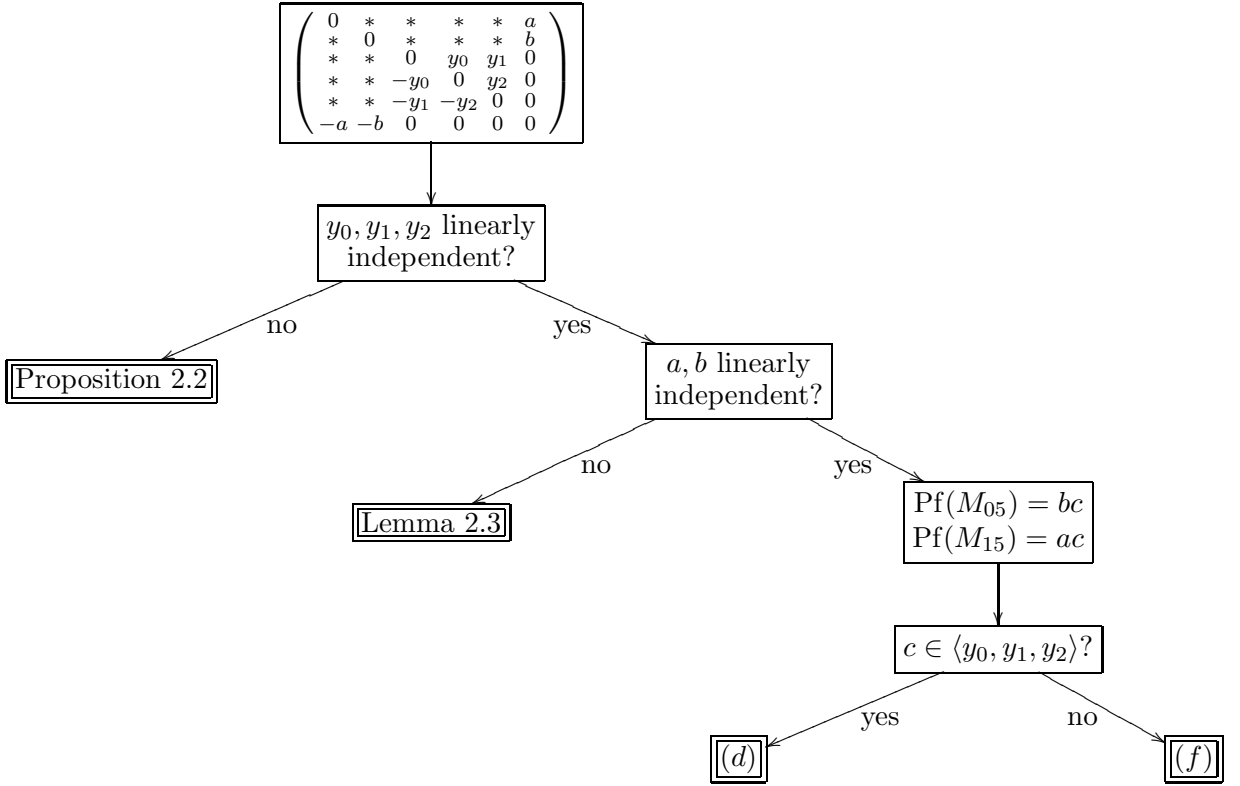

\xycenter{
	&*+[F]\txt{\text{$
	\left(
	\begin{smallmatrix} 
	0 & * & * & * & * & \,\, a \,\,\\
	* & 0 & * & * & * & b\\
	* & * & 0 & y_0 & y_1 & 0 \\
	* & * & -y_0 & 0 & y_2& 0 \\
	* & * & -y_1 & -y_2 & 0 & 0 \\
	-a&  -b & 0 & 0 & 0 & 0 \\
	\end{smallmatrix}
	\right)$}}
	\ar[d]
	\\
	&*+[F]\txt{$y_0, y_1, y_2$ linearly\\ independent?}
	\ar[dl]^{\txt{no}}  
	\ar[dr]_{\txt{yes}} 
	\\
	*+[F=]\txt{Proposition \ref{pHammerFormat}}
	&
	&*+[F]\txt{$a,b$ linearly \\ independent?}
	\ar[dl]^{\txt{no}}
	\ar[dr]_{\txt{yes}}
	&
	\\
	&*+[F=]\txt{Lemma \ref{lNS1}} 
	&
	&*+[F]\txt{$\Pf(M_{05}) = bc$\\ $\Pf(M_{15}) = ac$}
	\ar[d]
	\\
	&&&
	*+[F]\txt{$c \in \langle y_0, y_1, y_2 \rangle$?}
	\ar[dl]^{\txt{yes}}  
	\ar[dr]_{\txt{no}} 
	& 
	\\
	&&*+[F=]\txt{$(d)$} 
	&
	&*+[F=]\txt{$(f)$} 
}

\caption{Flowchart for the proof of Proposition \ref{pArrowFormat}} \label{fFlowArrow}
\end{figure}

This leaves us to look at matrices that have arrow format. The proof is divided into several cases, see Figure \ref{fFlowArrow} for a flow chart of the proof. For future reference we separate one case in the following Lemma:

\begin{lemma} 

\label{lNS1}

Consider a skew $6 \times 6$ matrix $M$ of linear forms with format
\[
	\left(\begin{smallmatrix} 
	0 & * & * & * & * & *\\
	* & 0 & * & * & * & 0 \\
	* & * & 0 & * & * & 0 \\
	* & * & * & 0 & * & 0 \\
	* & * & * & * & 0 & 0 \\
	*&  0 & 0 & 0 & 0 & 0 \\
	\end{smallmatrix}\right)
\]
and vanishing Pfaffian. Then $M$ has format $(a)$, $(b)$ or $(c)$ in Table \ref{tFormats}.
\end{lemma}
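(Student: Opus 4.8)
The plan is to pivot on the single nonzero entry allowed in the last row and column. Write $a := m_{05}$, which by the prescribed form is the only entry of row and column $5$ that may be nonzero. Expanding $\Pf(M)$ along the index $5$ via Proposition \ref{pPfaffianLaplace} annihilates every term except the one coming from $m_{50}$, so up to sign one obtains
\[
\Pf(M) = a\,\Pf(M_{05}),
\]
where $M_{05}$ is the generic skew $4\times 4$ matrix carried by rows and columns $1,2,3,4$.

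First I would dispose of the degenerate case $a = 0$: then the whole of row and column $5$ vanishes, so $M$ is literally of form $(a)$ and there is nothing more to prove. So assume $a \neq 0$. Since the entries lie in the integral domain $\mathrm{Sym}(V^*)$, the identity $a\,\Pf(M_{05}) = 0$ forces $\Pf(M_{05}) = 0$. The Appendix classification of vanishing-Pfaffian skew $4\times 4$ matrices of linear forms then supplies $S' \in \GL_4(\C)$ with $S' M_{05} S'^{t}$ equal to one of the two forms
\[
\left(\begin{smallmatrix} 0 & * & * & * \\  * & 0& 0 & 0 \\  * & 0& 0 & 0 \\  * & 0& 0 & 0 \end{smallmatrix}\right), \qquad
\left(\begin{smallmatrix}  0 & * & * & 0 \\ * & 0 & * & 0 \\ * & * & 0 & 0   \\ 0 & 0 & 0 & 0   \end{smallmatrix}\right).
\]

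The crux is to upgrade $S'$ to a congruence of the full matrix without spoiling the ambient zero pattern. I would take the block-diagonal $S := \mathrm{diag}(1, S', 1) \in \GL_6(\C)$, which acts only on coordinates $1,2,3,4$. Because row and column $0$ are entirely free ($*$) and row and column $5$ carry only the entry $a$ (unaffected by $S'$, while the zero column $(m_{15},\dots,m_{45})^t$ is sent to $S'\cdot 0 = 0$), the matrix $S M S^{t}$ retains the Lemma's shape with its central block now normalized. Reading off the resulting $6\times 6$ pattern yields form $(b)$ in the first case and form $(c)$ in the second, which is exactly the claim.

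The only genuine subtlety is this compatibility of the block-diagonal extension with the surrounding format; everything else is bookkeeping on the explicit $6\times 6$ patterns. The separate treatment of $a = 0$ is likewise essential, since the passage from $a\,\Pf(M_{05}) = 0$ to $\Pf(M_{05}) = 0$ — and hence the appeal to the Appendix — requires $a$ to be a nonzero divisor.
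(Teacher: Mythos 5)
Your proposal is correct and follows essentially the same route as the paper's proof: the case split on $m_{05}$, the Pfaffian Laplace expansion along the last row to force $\Pf(M_{05})=0$, the appeal to the Appendix classification of skew $4\times 4$ matrices, and the block-diagonal congruence $\mathrm{diag}(1,S',1)$ leading to formats $(b)$ and $(c)$. Your added remarks (integral-domain argument for cancelling $a$, and the check that the block extension preserves the zero pattern) just make explicit what the paper leaves implicit.
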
	

\begin{proof}
We can assume that $M$ is of the above form. If $m_{05} = 0$ then $M$ has format $(a)$ in Table \ref{tFormats}

\medskip

If $m_{05} \not=0$ then $\Pf(M_{05})$ must vanish (Pfaffian Laplace expansion with respect to the last row). By Table \ref{tKnownFormats} there exists a $S \in \GL_4$ such that 
that $S^tM_{05}S$ is of the form
\[
\left(\begin{smallmatrix} 0 & * & * & * \\  * & 0& 0 & 0 \\  * & 0& 0 & 0 \\  * & 0& 0 & 0 \end{smallmatrix}\right) 
\quad \text{or} \quad
\left(\begin{smallmatrix}  0 & * & * & 0 \\ * & 0 & * & 0 \\ * & * & 0 & 0   \\ 0 & 0 & 0 & 0   \end{smallmatrix}\right).
\]
Consequently, after operating with $\left(\begin{smallmatrix} 1 & 0 & 0 \\ 0 & S & 0 \\ 0 & 0 & 1 \end{smallmatrix}\right)$ on $M$ we see that $M$ has format
\[
	\left(\begin{smallmatrix} 
	0 & * & * & * & * & *\\
	* & 0 & * & * & * & 0\\
	* & * & 0 & 0 & 0 & 0 \\
	* & * & 0 & 0 & 0& 0 \\
	* & * & 0 & 0 & 0 & 0 \\
	*&  0 & 0 & 0 & 0 & 0 \\
	\end{smallmatrix}\right)
	\quad \text{or} \quad
	\left(\begin{smallmatrix} 
	0 & * & * & * & * & *\\
	* & 0 & * & * & 0 & 0\\
	* & * & 0 & * & 0& 0 \\
	* & * & * & 0 & 0 & 0 \\
	* & 0 & 0 & 0 & 0 & 0 \\
	*&  0 & 0 & 0 & 0 & 0 \\
	\end{smallmatrix}\right)
\]
i.e. format $(b)$ or $(c)$ in Table \ref{tFormats}.
\end{proof}

\begin{proposition} 

\label{pArrowFormat}

Consider a skew $6 \times 6$ matrix $M$ of linear forms with arrow format
\[
	\left(\begin{smallmatrix} 
	0 & * & * & * & * & *\\
	* & 0 & * & * & * & * \\
	* & * & 0 & * & * & 0 \\
	* & * & * & 0 & * & 0 \\
	* & * & * & * & 0 & 0 \\
	*&  * & 0 & 0 & 0 & 0 \\
	\end{smallmatrix}\right)
\]
and vanishing Pfaffian. Then $M$ has one of the formats in Table \ref{tFormats}.
\end{proposition}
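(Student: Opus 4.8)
The plan is to follow the flowchart in Figure \ref{fFlowArrow}, reducing everything to a single Pfaffian identity and then splitting into cases. First I would fix the arrow format and relabel the nonzero entries as in the top box of Figure \ref{fFlowArrow}, writing $a = m_{05}$, $b = m_{15}$ and $y_0 = m_{23}$, $y_1 = m_{24}$, $y_2 = m_{34}$; no transformation is needed here, only notation. Expanding $\Pf(M)$ along the last row by Proposition \ref{pPfaffianLaplace} (only $m_{50} = -a$ and $m_{51} = -b$ survive) gives $\Pf(M) = a\,\Pf(M_{05}) - b\,\Pf(M_{15})$, and computing the two $4\times 4$ sub-Pfaffians yields
\[
\Pf(M_{05}) = m_{14}y_0 - m_{13}y_1 + m_{12}y_2, \qquad \Pf(M_{15}) = m_{04}y_0 - m_{03}y_1 + m_{02}y_2 .
\]
Vanishing of $\Pf(M)$ is therefore equivalent to the single relation $a\,\Pf(M_{05}) = b\,\Pf(M_{15})$.

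Next I would run the two cheap branches. If $y_0,y_1,y_2$ are linearly \emph{dependent}, the congruence $\GL_3$-action on rows and columns $\{2,3,4\}$ is, via the identification $\Lambda^2\C^3 \cong \C^3$, the standard action on the triple $(y_2,-y_1,y_0)$; since these forms are dependent I can kill one coordinate, arranging $m_{34}=0$ while leaving the already-zero entries $m_{25},m_{35},m_{45}$ and the forms $a,b$ untouched. This produces a zero $3\times 3$ block in rows and columns $\{3,4,5\}$, i.e. hammer format, and Proposition \ref{pHammerFormat} finishes. If instead $a,b$ are linearly \emph{dependent}, a row/column operation on $\{0,1\}$ makes $m_{15}=0$, which is exactly the shape treated in Lemma \ref{lNS1}; so that lemma finishes. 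Hence I may assume $y_0,y_1,y_2$ independent and $a,b$ independent. Then $a,b$ are coprime linear forms, so $a \mid \Pf(M_{15})$, and the relation forces $\Pf(M_{15}) = ac$ and $\Pf(M_{05}) = bc$ for a single linear form $c$, as recorded in Figure \ref{fFlowArrow}.

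The remaining split is on whether $c \in \langle y_0,y_1,y_2\rangle$. Because $y_0,y_1,y_2$ form a regular sequence, the two identities say that, after subtracting $ac$ resp. $bc$, the linear-form vectors $(m_{04},-m_{03},m_{02})$ and $(m_{14},-m_{13},m_{12})$ are linear syzygies of $(y_0,y_1,y_2)$; writing such a syzygy in terms of the Koszul generators $y_ie_j-y_je_i$ expresses all six entries $m_{0j},m_{1j}$ ($j=2,3,4$) as $\C$-linear combinations of $a,b,c$ and the $y_i$. In the case $c \in \langle y_0,y_1,y_2\rangle$ I would pass to the block decomposition into rows/columns $\{0,1,5\}$ and $\{2,3,4\}$: the first diagonal block carries $m_{01},a,b$, the second is the full skew block in $y_0,y_1,y_2$, and after clearing the $a,b$-terms by column operations with column $5$ the off-diagonal block has all entries in $\langle y_0,y_1,y_2\rangle$ and a vanishing last row. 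One then solves $PN = -(\text{off-diagonal block})$ for a \emph{constant} matrix $P$ (possible precisely because the right-hand side is built from the $y_i$ with the Koszul constants), and the block-unipotent congruence $\left(\begin{smallmatrix} I & P \\ 0 & I\end{smallmatrix}\right)$ annihilates the off-diagonal block, leaving $M$ block-diagonal: this is format $(d)$.

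In the case $c \notin \langle y_0,y_1,y_2\rangle$ I would first observe that both left-hand sides lie in the ideal $(y_0,y_1,y_2)$, so $ac,bc \in (y_0,y_1,y_2)$; as this ideal is prime and $c$ is not in it, both $a$ and $b$ lie in $\langle y_0,y_1,y_2\rangle$. A Hodge-type $\GL_3$-congruence on $\{2,3,4\}$ then arranges $y_0 = -a$ and $y_1 = -b$ (possible since $a,b$ are independent elements of the $3$-space $\langle y_0,y_1,y_2\rangle$), so that in the decomposition $\{0,1,2\}\mid\{3,4,5\}$ the relations $m_{05}=-m_{23}$ and $m_{15}=-m_{24}$ hold automatically, i.e. two of the three skew relations of the off-diagonal block are already satisfied. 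Re-running the syzygy bookkeeping, now over the regular sequence $a,b,y_2$, forces $m_{02},m_{12}\in\langle a,b\rangle$ and pins down $m_{03},m_{14}$, and a final sequence of elementary congruences clears $N_0$ and the diagonal of the off-diagonal block while matching $m_{04}=-m_{13}$, producing exactly the double-skew format $(f)$. The main obstacle is precisely this last construction: keeping the double-skew structure intact while clearing the correct entries, and in particular verifying that the residual off-diagonal block genuinely becomes skew-symmetric (so that $M$ is double skew of format $(f)$) and not merely of the right zero pattern. The branch $c\in\langle y_0,y_1,y_2\rangle$ is comparatively safe, since there the target $(d)$ is mere block-diagonality, achieved in one block-unipotent step; all the delicate bookkeeping sits in the $(f)$ branch.
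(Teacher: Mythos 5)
Your proposal is correct and its skeleton is exactly the paper's: the same two cheap reductions (dependent $y_i$ to Proposition \ref{pHammerFormat}, dependent $a,b$ to Lemma \ref{lNS1}), the same factorisation $\Pf(M_{05})=bc$, $\Pf(M_{15})=ac$ from coprimality, and the same split on $c\in\langle y_0,y_1,y_2\rangle$. Two places genuinely differ. In the branch $c\notin\langle y_0,y_1,y_2\rangle$ the paper does \emph{not} use primality: it specialises $c=0$, clears the resulting matrix $\overline{M}$ by an element of $H$, applies the same congruence to $M$ so that the cleared entries become constant multiples $\lambda_{ij}c$, and only then expands $\Pf(M')$ again to force $a,b\in\langle y_0,y_1,y_2\rangle$. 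Your one-line argument ($ac,bc\in(y_0,y_1,y_2)$, the ideal is prime, $c$ is not in it) reaches the same conclusion and is a real simplification. In the branch $c\in\langle y_0,y_1,y_2\rangle$ your single block-unipotent congruence is a repackaging of the paper's row-by-row clearing inside $H$; both rest on the same fact that linear syzygies of $(y_0,-y_1,y_2)$ are Koszul, i.e.\ constant combinations of the rows of the skew $y$-block. Finally, the obstacle you flag in the $(f)$ branch dissolves once you actually run your own bookkeeping: with $y_0=-a$, $y_1=-b$ the two identities say that $(-m_{14},\,m_{13}-c,\,m_{12})$ and $(-m_{04}-c,\,m_{03},\,m_{02})$ are syzygies of the regular sequence $(a,b,y_2)$, hence
\[
m_{12}=\alpha_1a+\alpha_2b,\quad m_{13}=c+\alpha_0a-\alpha_2y_2,\quad m_{14}=\alpha_0b+\alpha_1y_2,
\]
\[
m_{02}=\beta_1a+\beta_2b,\quad m_{03}=\beta_0a-\beta_2y_2,\quad m_{04}=-c+\beta_0b+\beta_1y_2,
\]
and the congruence in $H$ adding $\alpha_0,-\alpha_1,-\alpha_2$ (resp.\ $\beta_0,-\beta_1,-\beta_2$) times rows and columns $2,3,4$ to row and column $1$ (resp.\ $0$) sends $(m_{12},m_{13},m_{14})$ to $(0,c,0)$ and $(m_{02},m_{03},m_{04})$ to $(0,0,-c)$. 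So $m_{04}=-m_{13}$ and the vanishing of the diagonal entries of the off-diagonal block come out automatically from the Koszul constants; the block is genuinely skew, matching the matrix $M'$ displayed at the end of the paper's proof (with $\lambda c$ there equal to $-c$ here after rescaling $c$). No step of your plan fails.
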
	
	 
\begin{proof}
After acting by an element in $\mathrm{GL}_6 (\C )$, we can assume that $M$ is of the following form
\[
\left(\begin{smallmatrix} 
	0 & m_{01} & m_{02} & m_{03} & m_{04} & a \\
	-m_{01} & 0 & m_{12} & m_{13} & m_{14} & b \\
	-m_{02} & -m_{12} & 0 & y_0 & y_1 & 0 \\
	-m_{03} & -m_{13}&  -y_0 & 0 & y_2 & 0 \\
	-m_{04} & -m_{14}&  -y_1 & -y_2 & 0 & 0\\
	-a & -b & 0 & 0 & 0 & 0
	\end{smallmatrix}\right)
\]
Consider the subgroup $H \subset \GL_6$ consisting of matrices of the form
\[
	 \begin{pmatrix}
		\id_2 & S_1 & S_3 \\
		0 & S_2 & S_4\\
		0 & 0 & 1
		\end{pmatrix}
\]
with $S_1$ a $2 \times 3$ matrix, $S_2 \in \GL_3$, $S_3$ a $2 \times 1$ matrix,  and $S_4$ a $3 \times 1$ matrix. $H$ operates 
 on the above $M$ without
changing its form, i.e. $a, b$ and the zeros remain in their places and only $S_2$ operates on 
$\left(\begin{smallmatrix}
	0 & y_0 & y_1  \\
	 -y_0 & 0 & y_2  \\
	  -y_1 & -y_2 & 0 \\
\end{smallmatrix}\right)$.

Notice that we can assume that $y_0,y_1,y_2$ are linearily independent. If not there exists an $h \in H$  such that
$h M h^t$ has the form
\[
	 \left(\begin{smallmatrix} 
	0 & * & * & * & * & *\\
	* & 0 & * & * & * & * \\
	* & * & 0 & * & * & 0 \\
	* & * & * & 0 & 0 & 0 \\
	* & * & * & 0 & 0 & 0 \\
	*&  * & 0 & 0 & 0 & 0 \\
	\end{smallmatrix}\right)
\]
and we are in the situation of Proposition \ref{pHammerFormat}.

\medskip 

\noindent

Notice also that we can assume that $a$ and $b$ are linearly independent in $V^*$. If not, we can 
assume that $M$ is of format
\[
	 \left(\begin{smallmatrix} 
	0 & * & * & * & * & *\\
	* & 0 & * & * & * & 0 \\
	* & * & 0 & * & * & 0 \\
	* & * & * & 0 & 0 & 0 \\
	* & * & * & 0 & 0 & 0 \\
	*&  0 & 0 & 0 & 0 & 0 \\
	\end{smallmatrix}\right)
\]
and we are in the situation of Lemma \ref{lNS1}

\fbox{$\dim \langle a,b \rangle = 2$:} 
Here we have
\[
	0 = \Pf M =  b \Pf(M_{15}) - a \Pf(M_{05}) 
\]
and hence
\[
	 a \Pf(M_{05}) = b \Pf(M_{15}).
\]
This implies that there exist a linear form $c \in V^*$, such that  
\[
	\Pf(M_{05}) = bc \quad \text{and} \quad \Pf(M_{15}) = ac.
\]
We now make a case distinction as to whether $c\in \langle y_0,y_1,y_2 \rangle$ (this includes the case $c=0$)
or $c \not \in \langle y_0,y_1,y_2 \rangle$

\fbox{$\dim \langle a,b \rangle = 2, c\in \langle y_0,y_1,y_2 \rangle$:} 
Write
\[ 
	c = \lambda_0 y_0 + \lambda_1 y_1 + \lambda_2 y_2\
\]
and consider
\[
	S = \id_{\C^6} - 
	\left(\begin{smallmatrix}
	0 & 0 & 0 & 0 & 0  & 0\\
	0 & 0 & 0 & 0 & 0 & 0\\
	0 & 0 & 0 & 0 & 0& \lambda_2 \\
	0 & 0 & 0 & 0 & 0 & -\lambda_1 \\
	0& 0 & 0 & 0 & 0 & \lambda_0\\
	0 & 0 & 0 & 0 & 0 & 0 \\
	\end{smallmatrix}\right)
\]
then
\[
	N = S M S^t = M - 	
	\left(\begin{smallmatrix}
	0 & 0 & a\lambda_2 & -a\lambda_1 &  a\lambda_0 & 0 \\
	0 & 0 & b\lambda_2 & -b\lambda_1 &  b\lambda_0 & 0 \\
	-a\lambda_2 & -b\lambda_2 & 0 & 0 & 0 & 0 \\
	a\lambda_1 & b\lambda_1 & 0 & 0 & 0 & 0 \\
	-a\lambda_0 & -b\lambda_0 &0 & 0 & 0 & 0 \\
	0 & 0 & 0 & 0 & 0  & 0 \\
	\end{smallmatrix}\right)
\]
Hence
\[
	\Pf(N_{05}) = \Pf(M_{05}) - b(\lambda_0y_0+\lambda_1y_1+\lambda_2y_2) = bc - bc = 0
\]
and similarly $\Pf(N_{15}) = 0$, compare \cite{BB22M2}.

Writing out the first Pfaffian we obtain
\[
	0 = n_{14}y_0-n_{13}y_1+n_{12}y_2 = (y_0,-y_1,y_2)
	\begin{pmatrix} n_{14} \\ n_{13} \\ n_{12} \end{pmatrix},
\]	
i.e $(n_{14}, n_{13}, n_{14})^t$ is a linear syzygy of $(y_0,-y_1,y_2)$. The vector space of all linear syzygies of $(y_0,-y_1,y_2)$ is generated by the columns of
\[
\begin{pmatrix}
       0&{y}_{2}&{y}_{1}\\
       {-{y}_{2}}&0&{y}_{0}\\
       {-{y}_{1}}&{-{y}_{0}}&0\end{pmatrix}.
\]
Hence there exist constants $\alpha_0,\alpha_1,\alpha_2 \in \C$ such that 
\[
N = \left(\begin{smallmatrix} 
	0 & n_{01} & n_{02} & n_{03} & n_{04} & a \\
	-n_{01} & 0 & -\alpha_0y_1-\alpha_1y_0  &  -\alpha_0y_2+\alpha_2y_0 & \alpha_1y_2+\alpha_2y_1& b \\
	-n_{02} & \alpha_0y_1+\alpha_1y_0 & 0 & y_0 & y_1 & 0 \\
	-n_{03} & \alpha_0y_2-\alpha_2y_0&  -y_0 & 0 & y_2 & 0 \\
	-n_{04} & -\alpha_1y_2-\alpha_2y_1&  -y_1 & -y_2 & 0 & 0\\
	-a & -b & 0 & 0 & 0 & 0
	\end{smallmatrix}\right)
\]
We see that we can operate with an appropriate $S \in H$ to obtain
\[
\left(\begin{smallmatrix} 
	0 & n_{01}' & n_{02} & n_{03} & n_{04} & a \\
	-n_{01}' & 0 & 0 & 0 & 0 & b \\
	-n_{02} & 0& 0 & y_0 & y_1 & 0 \\
	-n_{03} & 0&  -y_0 & 0 & y_2 & 0 \\
	-n_{04} & 0&  -y_1 & -y_2 & 0 & 0\\
	-a & -b & 0 & 0 & 0 & 0
	\end{smallmatrix}\right)
\]
using $\Pf(N_{15})=0$ in the same way we obtain
\[
\left(\begin{smallmatrix} 
	0 & n_{01}' & 0 & 0 & 0& a \\
	-n_{01}' & 0 & 0 & 0 & 0 & b \\
	0 & 0& 0 & y_0 & y_1 & 0 \\
	0 & 0&  -y_0 & 0 & y_2 & 0 \\
	0 & 0&  -y_1 & -y_2 & 0 & 0\\
	-a & -b & 0 & 0 & 0 & 0
	\end{smallmatrix}\right).
\]
After rotating the last $4$ rows  and columns cyclically, we see that $N$ and hence $M$ is of format $(d)$.

\fbox{$\dim \langle a,b \rangle = 2, c\not\in \langle y_0,y_1,y_2 \rangle$:} 
Choosing a basis of $V^*$ that contains $y_0$, $y_1$, $y_2$ and $c$, we 
let 
\newcommand{\barm}{\overline{m}}
\[
\overline{M} = \left(\begin{smallmatrix} 
	0 & \barm_{01} & \barm_{02} & \barm_{03} & \barm_{04} & \overline{a} \\
	-\barm_{01} & 0 & \barm_{12} & \barm_{13} & \barm_{14} & \overline{b} \\
	-\barm_{02} & -\barm_{12} & 0 & y_0 & y_1 & 0 \\
	-\barm_{03} & -\barm_{13}&  -y_0 & 0 & y_2 & 0 \\
	-\barm_{04} & -\barm_{14}&  -y_1 & -y_2 & 0 & 0\\
	-\overline{a} & -\overline{b} & 0 & 0 & 0 & 0
	\end{smallmatrix}\right)
\]
be the result of substituting $c=0$ into $M$.

We then have
\[
	\Pf(\overline{M}_{05}) = \overline{b}\overline{c} =0 \quad \text{and}  \quad \Pf(\overline{M}_{15}) = \overline{a}\overline{c} =0
\]
As in the previous case we can find an $S \in H$ such that 
\[
S \overline{M} S^t = \left(\begin{smallmatrix} 
	0 & * & 0 & 0 & 0 & \overline{a} \\
	* & 0 & 0& 0& 0 & \overline{b} \\
	0 & 0 & 0 & y_0 & y_1 & 0 \\
	0 & 0&  -y_0 & 0 & y_2 & 0 \\
	0 & 0&  -y_1 & -y_2 & 0 & 0\\
	-\overline{a} & -\overline{b} & 0 & 0 & 0 & 0
	\end{smallmatrix}\right)
\]
Applying the same $S$ to $M$ gives
\[
M' = S M S^t = \left(\begin{smallmatrix} 
	0 & m_{01}'& \lambda_{02}c & \lambda_{03}c & \lambda_{04}c & a \\
	-m_{01}' & 0 & \lambda_{12}c& \lambda_{13}c& \lambda_{14}c & b \\
	-\lambda_{02}c & -\lambda_{12}c & 0 & y_0 & y_1 & 0 \\
	-\lambda_{03}c  & -\lambda_{13}c &  -y_0 & 0 & y_2 & 0 \\
	-\lambda_{04}c  & -\lambda_{14}c &  -y_1 & -y_2 & 0 & 0\\
	-a& -b & 0 & 0 & 0 & 0
	\end{smallmatrix}\right)
\]
Computing  the Pfaffian gives
\[
	0=\Pf(M') = -c\bigl((\lambda_{14}{y}_{0}-\lambda_{13}{y}_{1}+\lambda_{12}{y}_{2})a-(\lambda_{04}{y}_{0}-\lambda_{03}{y}_{1}+\lambda_{02}{y}_{2})b\bigr)
\]
compare \cite{BB22M2}. Since $c \not=0$
\[
	(\lambda_{14}{y}_{0}-\lambda_{13}{y}_{1}+\lambda_{12}{y}_{2})a=(\lambda_{04}{y}_{0}-\lambda_{03}{y}_{1}+\lambda_{02}{y}_{2})b
\]
It follows that $a$ and $b$ are proportional to $(\lambda_{04}{y}_{0}-\lambda_{03}{y}_{1}+\lambda_{02}{y}_{2})$
and $(\lambda_{14}{y}_{0}-\lambda_{13}{y}_{1}+\lambda_{12}{y}_{2})$, i.e $a,b \in \langle y_0,y_1,y_2\rangle$. We can therefore assume that after another $H$-operation we have $y_0=-a$ and $y_1=-b$. The equation above is then
\[
-\lambda_{14}a^2+\lambda_{13}ab+\lambda_{12}a{y}_{2}=-\lambda_{04}ab+\lambda_{03}b^2+\lambda_{02}b{y}_{2}
\]
and, by comparing coefficients we get 
\[
\lambda_{14}=\lambda_{12}=\lambda_{03}=\lambda_{02} = 0 \text{ and } \lambda_{13} = - \lambda_{04} =: -\lambda.
\]
This shows
\[
M' =  \left(\begin{smallmatrix} 
	0 & m_{01}'& 0 & 0 & \lambda c &  a \\
	-m_{01}' & 0 & 0 & -\lambda c & 0 & b \\
	0 & 0 & 0 & -a & -b & 0 \\
	0 & \lambda c & a & 0 & y_2 & 0 \\
	-\lambda c & 0 &  b & -y_2 & 0 & 0\\
	-a& -b & 0 & 0 & 0 & 0
	\end{smallmatrix}\right)
\]
We see that $M'$ and hence $M$ is of format $(f)$ in Table \ref{tFormats}.
\end{proof}

Now we can prove:

\begin{theorem} 

\label{tMain}

Consider a skew $6 \times 6$ matrix $M$ of linear forms with vanishing Pfaffian.
Then $M$ has one of the formats in Table \ref{tFormats}.
\end{theorem}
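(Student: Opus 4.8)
The plan is to run the case analysis summarised in the flowchart of Figure \ref{fFlowThm}, splitting on the geometry of the locus where $M$ degenerates. Concretely, I would first ask whether $M$ has a point of rank exactly $2$, that is, whether there is some $[v]\in \P(V)$ at which the skew form $M(v)$ has rank $2$. This dichotomy is exhaustive and is the natural one to make, since the two regimes are governed by completely different prior results: the empty-rank-two case is external input, while the nonempty case is handled entirely by the propositions already established above.

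If no rank $2$ point exists, then the rank $\le 2$ locus of $M$ is empty, and the conclusion is exactly the theorem of Manivel and Mezzetti \cite{MaMe05}, which I would simply cite. Here the hypothesis of \cite{MaMe05} coincides with the branch assumption, and as noted in the introduction it forces $\langle M\rangle$ to span at most a $\P^2$, placing $M$ in one of the listed formats. If, on the other hand, a rank $2$ point does exist, I would feed $M$ into Proposition \ref{pRank2}, which guarantees that $M$ has either hammer format or arrow format. These two sub-cases are then closed off by the two propositions proved above: Proposition \ref{pHammerFormat} shows that a matrix of hammer format with vanishing Pfaffian has format $(a)$, $(b)$, $(c)$ or $(e)$, and Proposition \ref{pArrowFormat} shows that a matrix of arrow format with vanishing Pfaffian has one of the formats in Table \ref{tFormats}. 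Combining the three branches exhausts all possibilities and yields the claim.

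The only residual content in the theorem itself is the bookkeeping that the union of the output formats produced by the three branches is contained in the list of Table \ref{tFormats}; all of the genuine analytic work has already been discharged in the auxiliary results. Were these propositions not available, I expect the main obstacle would lie squarely in the arrow-format case underlying Proposition \ref{pArrowFormat}: there one must track several nested linear-independence conditions, first on $y_0,y_1,y_2$ and then on $a,b$, reducing the failures of independence back to Proposition \ref{pHammerFormat} and Lemma \ref{lNS1} respectively, and only in the fully independent regime carry out the explicit Pfaffian expansion $0=\Pf(M)=b\,\Pf(M_{15})-a\,\Pf(M_{05})$ together with the ensuing syzygy computation. The final subtlety is the split according to whether the auxiliary form $c$ satisfies $c\in\langle y_0,y_1,y_2\rangle$, which is precisely what distinguishes the strictly semistable format $(d)$ from format $(f)$.
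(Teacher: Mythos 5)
Your proposal is correct and follows essentially the same route as the paper's own proof: the identical dichotomy on the existence of a rank $2$ point, citing Manivel--Mezzetti \cite{MaMe05} in the empty case, and otherwise combining Proposition \ref{pRank2} with Propositions \ref{pHammerFormat} and \ref{pArrowFormat}. The only difference is that the paper also records the explicit dictionary matching the Manivel--Mezzetti cases ($\ell_s$, $\ell_g$, $\pi_5$, $\pi_t$, $\pi_p$, $\pi_g$) to the formats of Table \ref{tFormats}, which is exactly the bookkeeping step you correctly identify as the remaining content.
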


\begin{proof}
If $M$ has empty rank $2$ locus, then the Theorem follows from work of Manivel and Mezzetti \cite[Proposition 2, Theorem 4]{MaMe05}. Indeed their cases $\ell_s$ and $\ell_g$ have format $(b)$, their case $\pi_5$ has format $(a)$, their case $\pi_t$ has format $(b)$, their case $\pi_p$ has format $(c)$ and their case $\pi_g$ has format $(d)$.

If $M$ does have a rank $2$ point, we know from Proposition \ref{pRank2} that $M$ has one of the following formats
\[
\left(\begin{smallmatrix} 
	0 & * & * & * & * & *\\
	* & 0 & * & * & * & * \\
	* & * & 0 & * & * & * \\
	* & * & * & 0 & 0& 0 \\
	* & * & * & 0 & 0 & 0 \\
	*&  * & * & 0 & 0 & 0 \\
	\end{smallmatrix}\right)
	\quad
\left(\begin{smallmatrix} 
	0 & * & * & * & * & *\\
	* & 0 & * & * & * & * \\
	* & * & 0 & * & * & 0 \\
	* & * & * & 0 & * & 0 \\
	* & * & * & * & 0 & 0 \\
	*&  * & 0 & 0 & 0 & 0 \\
	\end{smallmatrix}\right)
\]
In the first case the claim follows from Proposition \ref{pHammerFormat}, in the second case from Proposition \ref{pArrowFormat}.
\end{proof}

\section{Strictly semistable matrices}

We consider the operation of $\GL_6(\C)$ on the space of skew symmetric $6 \times 6$ matrices of linear forms given by
\[
	M \mapsto S M S^t.
\]
Gaia Comaschi in \cite[Thm. 3.1]{Co21} has given a criterion in terms of formats to check when a skew-symmetric $6 \times 6$ matrix of linear forms is not stable or not semistable with respect to this action: 

\begin{theorem}\label{tComaschi}
Let $M$ be a skew-symmetric $6 \times 6$ matrix of linear forms. Then $M$ is not stable, therefore strictly semistable or unstable,  if and only if $M$ has one of the following skew formats: 

\[
(\mathrm{NS}_1): 
\left(\begin{smallmatrix}
0 & 0 & 0 & 0 & 0 &* \\
0 & 0 & * & * & * & *  \\
0 & * & 0 & * & * & * \\
0 & * & * & 0 & * & * \\
0 & * & * & *  & 0 & * \\
* & * & * & * & * & 0 
\end{smallmatrix}\right) 
\quad
(\mathrm{NS}_2): 
\left(\begin{smallmatrix}
0 & 0 & 0 & 0 & * &* \\
0 & 0 & 0 & 0 & * & *  \\
0 & 0 & 0 & * & * & * \\
0 & 0 & * & 0 & * & * \\
* & * & * & *  & 0 & * \\
* & * & * & * & * & 0 
\end{smallmatrix}\right) 
\]

\[
(\mathrm{NS}_3): 
\left(\begin{smallmatrix}
0 & 0 & 0 & * & * &* \\
0 & 0 & 0 & * & * & *  \\
0 & 0 & 0 & * & * & * \\
* & * & * & 0 & * & * \\
* & * & * & *  & 0 & * \\
* & * & * & * & * & 0 
\end{smallmatrix}\right) 
\]

Furthermore, $M$  is not semistable, or equivalently, unstable, if and only if it has one of the following skew formats: 

\[
(\mathrm{NSS}_1): 
\left(\begin{smallmatrix}
0 & 0 & 0 & 0 & 0 & 0 \\
0 & 0 & * & * & * & *  \\
0 & * & 0 & * & * & * \\
0 & * & * & 0 & * & * \\
0 & * & * & *  & 0 & * \\
0 & * & * & * & * & 0 
\end{smallmatrix}\right) 
\quad
(\mathrm{NSS}_2): 
\left(\begin{smallmatrix}
0 & 0 & 0 & 0 & 0 &* \\
0 & 0 & 0 & 0 & 0 & *  \\
0 & 0 & 0 & * & * & * \\
0 & 0 & * & 0 & * & * \\
0 & 0 & * & *  & 0 & * \\
* & * & * & * & * & 0 
\end{smallmatrix}\right) 
\]

\[
(\mathrm{NSS}_3): 
\left(\begin{smallmatrix}
0 & 0 & 0 & 0 & * &* \\
0 & 0 & 0 & 0 & * & *  \\
0 & 0 & 0 & 0 & * & * \\
0 & 0 & 0 & 0 & * & * \\
* & * & * & *  & 0 & * \\
* & * & * & * & * & 0 
\end{smallmatrix}\right)
\]
\end{theorem}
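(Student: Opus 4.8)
The plan is to deduce this from the Hilbert--Mumford numerical criterion, recognizing the six formats as the destabilizing patterns for the congruence action of $\mathrm{SL}_6(\C)$ on skew matrices of linear forms $M \mapsto SMS^t$; the central torus only rescales $M$, so after projectivizing it is $\mathrm{SL}_6(\C)$-stability that is at issue. First I would set up the weight bookkeeping. Any one-parameter subgroup can be conjugated into the diagonal torus, and conjugating $\lambda$ by $S$ merely replaces $M$ by the congruent matrix $S^{-1}MS^{-t}$, which has the same format; so I may assume $\lambda(t) = \mathrm{diag}(t^{r_0},\dots,t^{r_5})$ with $r_0 \le \dots \le r_5$ and $\sum_i r_i = 0$. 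Because $\mathrm{SL}_6(\C)$ acts trivially on the coefficient space $V^*$, the entry $m_{ij}$ transforms with weight $r_i+r_j$ irrespective of which linear form it is, so that $\mu(M,\lambda) = -\min\{\,r_i+r_j : m_{ij}\neq 0\,\}$. The criterion then reads: $M$ is not stable exactly when some nontrivial such $\lambda$ has $r_i+r_j \ge 0$ at every nonzero entry, and $M$ is unstable exactly when some such $\lambda$ has $r_i+r_j>0$ at every nonzero entry. Equivalently the positions forced to vanish are $Z_\lambda=\{(i,j):r_i+r_j<0\}$, respectively $Z_\lambda'=\{(i,j):r_i+r_j\le 0\}$.

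For the implications from the formats I would exhibit an explicit destabilizing $\lambda$ in each case. The weight vectors $(-1,0,0,0,0,1)$, $(-1,-1,0,0,1,1)$, $(-1,-1,-1,1,1,1)$ serve for $\mathrm{NS}_1,\mathrm{NS}_2,\mathrm{NS}_3$: in each, every starred position of the format has $r_i+r_j\ge 0$, whence $\mu(M,\lambda)\le 0$ and $M$ is not stable. The weight vectors $(-5,1,1,1,1,1)$, $(-4,-4,1,1,1,5)$, $(-1,-1,-1,-1,2,2)$ serve for $\mathrm{NSS}_1,\mathrm{NSS}_2,\mathrm{NSS}_3$: every starred position now has $r_i+r_j>0$, whence $\mu(M,\lambda)<0$ and $M$ is unstable. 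This half is a direct inspection of the six patterns against these weights.

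For the converse I would start from a destabilizing sorted $\lambda$ and show that $Z_\lambda$ (respectively $Z_\lambda'$) already contains the zero-pattern of one of the listed formats, so that $M$ has that format. The unstable case is immediate from the identity $(r_0+r_5)+(r_1+r_4)+(r_2+r_3)=\sum_i r_i=0$: at least one complementary pair-sum is $\le 0$. Since the weights are sorted, $r_0+r_5\le 0$ forces the entire first row into $Z_\lambda'$ (pattern $\mathrm{NSS}_1$), $r_1+r_4\le 0$ forces the $\mathrm{NSS}_2$-pattern, and $r_2+r_3\le 0$ forces the $4\times4$ corner block (pattern $\mathrm{NSS}_3$). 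For the not-stable case I would show that $r_0+r_4<0$ or $r_1+r_2<0$ always holds: if both failed, then $r_3+r_5=-(r_0+r_4)-(r_1+r_2)\le 0$, while nontriviality forces $r_5>0$, hence $r_3<0$, hence $r_2<0$ and $r_1+r_2<0$, a contradiction. The first alternative puts the $\mathrm{NS}_1$-star into $Z_\lambda$ and the second the $\mathrm{NS}_3$-triangle; the format $\mathrm{NS}_2$ is subsumed, since its zero set contains a triangle and hence every matrix of format $\mathrm{NS}_2$ already has format $\mathrm{NS}_3$.

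I expect the main obstacle to be organizational rather than conceptual: fixing the linearization so that the one-parameter weights are exactly $r_i+r_j$ with $\sum_i r_i=0$, and then matching each sorted weight regime to the correctly labeled pattern while verifying the forced-zero containments without sign or index errors. The one genuinely useful idea is the complementary-pair identity $(r_0+r_5)+(r_1+r_4)+(r_2+r_3)=0$, which settles the unstable classification at once and guides the slightly more delicate inequality needed in the not-stable case.
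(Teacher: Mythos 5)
Your proposal is correct, but note that the paper itself contains no proof of this statement: Theorem \ref{tComaschi} is quoted as a black box from Comaschi \cite[Thm.~3.1]{Co21}, so what you have written is a self-contained proof of a cited result rather than an alternative to an argument in the text. Your Hilbert--Mumford bookkeeping checks out. Since $\GL_6(\C)$ acts trivially on $V^*$, every coordinate of the entry $m_{ij}$ has weight $r_i+r_j$, so $\mu$ depends only on the zero pattern; conjugating and sorting the one-parameter subgroup replaces $M$ by a congruent matrix, which is harmless because format is a congruence invariant. I verified your six weight vectors: each sums to zero, the first three give $r_i+r_j\ge 0$ on all stars of $\mathrm{NS}_1,\mathrm{NS}_2,\mathrm{NS}_3$, and the last three give strict positivity on all stars of $\mathrm{NSS}_1,\mathrm{NSS}_2,\mathrm{NSS}_3$. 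In the converse, the identity $(r_0+r_5)+(r_1+r_4)+(r_2+r_3)=0$ does settle the unstable case, with the three sub-cases $r_0+r_5\le 0$, $r_1+r_4\le 0$, $r_2+r_3\le 0$ forcing exactly the zero sets of $\mathrm{NSS}_1$, $\mathrm{NSS}_2$, $\mathrm{NSS}_3$ respectively, and your dichotomy ``$r_0+r_4<0$ or $r_1+r_2<0$'' is correct (if both failed, $r_3+r_5\le 0$ together with $r_5>0$ from nontriviality forces $r_1\le r_2\le r_3<0$, a contradiction), yielding the $\mathrm{NS}_1$ and $\mathrm{NS}_3$ zero sets. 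One byproduct worth flagging: your converse shows that every non-stable $M$ already has format $\mathrm{NS}_1$ or $\mathrm{NS}_3$, so $\mathrm{NS}_2$ is logically redundant in the ``only if'' direction --- indeed, as you observe, the zero pattern of $\mathrm{NS}_2$ contains the upper-left $3\times 3$ zero block of $\mathrm{NS}_3$, so every matrix of form $\mathrm{NS}_2$ has form $\mathrm{NS}_3$; this is consistent with the theorem as stated but shows the list is not minimal, presumably being organized in \cite{Co21} by the type of destabilizing flag rather than by inclusion of formats. The only point I would ask you to make explicit in a final write-up is the identification of the stability notion: the paper defines the action of $\GL_6(\C)$ on matrices by $M\mapsto SMS^t$, and your argument takes this to mean $\mathrm{SL}_6(\C)$-stability of the tensor in $\P(\Lambda^2 W^*\otimes V^*)$ with the standard linearization; since Comaschi's paper is phrased in terms of linear systems of skew-symmetric forms, a sentence confirming that the two GIT problems agree in the sense used here (which your derivation of exactly these six formats strongly corroborates) would close the loop. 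Relative to the paper, your approach buys a short, elementary, and verifiable replacement for a citation; relative to \cite{Co21}, it is a streamlined reconstruction via the same numerical criterion, with the complementary-pair identity doing the organizing work.
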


Notice that Comaschi uses the convention of collecting the zeros in the upper left corner, while we (following Manivel and Mezzetti \cite{MaMe05}) collect the zeros in the lower right corner. To convert between these conventions we say that the $r \times r$ matrix
\[
	 M^{\mathrm{rev}} = (m_{r-i-1,r-j-1})_{0 \le i,j < r}
\]
is obtained from $M = (m_{i,j})_{0 \le i,j < r}$ by {\sl reversing rows and columns}. We also call $ M^{\mathrm{rev}}$ the {\sl reverse matrix}. Notice that if $M$ has format $F$ it also has the format described by the reverse of $F$.

We  now assume in addition that the Pfaffian of $M$ vanishes. If $M$ is stable it has to have format $(d)$ or $(f)$ in Table \ref{tFormats} since $(a)$, $(b)$, $(c)$ are unstable by the above Theorem and matrices of format $(e)$ are not stable.  Comaschi gives normal forms for the stable cases. At the end of this section we reprove her results using our methods. Before that we extend Comaschi's classification to the strictly semistable case. We start by narrowing down the possible format of a strictly semistable matrix $M$:


\begin{figure}
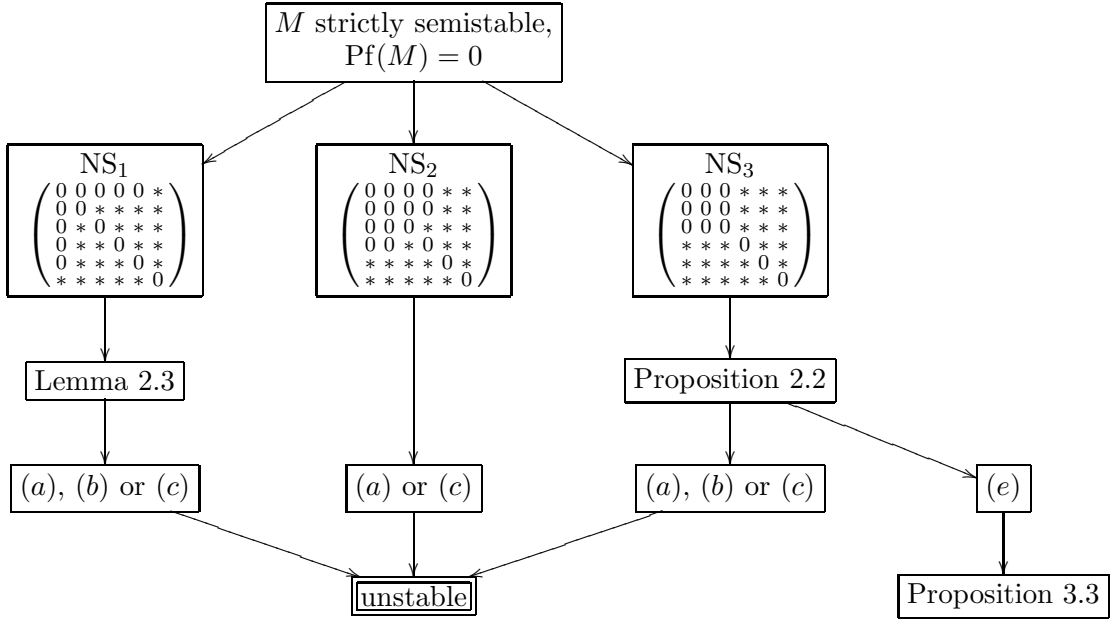

\xycenter{
	&*+[F]\txt{$M$ strictly semistable, \\ $\Pf(M) = 0$}
	\ \ar[dl]  \ar[d] \ar[dr]
	\\
	*+[F]\txt{$\mathrm{NS}_1$ \\ \text{
	$\left(\begin{smallmatrix}
	0 & 0 & 0 & 0 & 0 &* \\
	0 & 0 & * & * & * & *  \\
	0 & * & 0 & * & * & * \\
	0 & * & * & 0 & * & * \\
	0 & * & * & *  & 0 & * \\
	* & * & * & * & * & 0 
	\end{smallmatrix}\right)$}
	}
	\ar[d]
	&
	*+[F]\txt{$\mathrm{NS}_2$ \\ \text{
	$\left(\begin{smallmatrix}
	0 & 0 & 0 & 0 & * &* \\
	0 & 0 & 0 & 0 & * & *  \\
	0 & 0 & 0 & * & * & * \\
	0 & 0 & * & 0 & * & * \\
	* & * & * & *  & 0 & * \\
	* & * & * & * & * & 0 
	\end{smallmatrix}\right)$}
	}
	\ar[dd]
	&
	*+[F]\txt{$\mathrm{NS}_3$ \\ \text{
	$\left(\begin{smallmatrix}
	0 & 0 & 0 & * & * &* \\
	0 & 0 & 0 & * & * & *  \\
	0 & 0 & 0 & * & * & * \\
	* & * & * & 0 & * & * \\
	* & * & * & *  & 0 & * \\
	* & * & * & * & * & 0 
	\end{smallmatrix}\right)$}
	}
	\ar[d]
	\\
	*+[F]\txt{Lemma \ref{lNS1}}
	\ar[d]
	&&
	*+[F]\txt{Proposition \ref{pHammerFormat}}
	\ar[d] \ar[dr]
	\\
	*+[F]\txt{$(a)$, $(b)$ or $(c)$}
	\ar[dr]
	&*+[F]\txt{$(a)$  or $(c)$}
	\ar[d]
	&*+[F]\txt{$(a)$, $(b)$  or $(c)$}
	\ar[dl]
	&*+[F]\txt{$(e)$}
	\ar[d]
	\\
	&*+[F=]\txt{unstable}	
	&
	&*+[F]\txt{Proposition \ref{pSemiStable}}		
}

\caption{Flowchart for the classification of strictly semistable matrices  $M$ with $\Pf(M) = 0$} \label{fFlowSemiStable}
\end{figure}


\begin{proposition} \label{pSemiStableDoubleSkew}
Let $M$ be a strictly semistable, skew symmetric $6 \times 6$ matrix of linear forms with vanishing Pfaffian. Then $M$ has double skew format
\[
\left(\begin{smallmatrix}
0 & 0 & 0 & 0 & *  & *\\
0 & 0 & 0 & * & 0 & *\\
0 & 0 & 0 & * &  *& 0\\
0 & * & *  & 0 & *& *\\
 * & 0 & * & * & 0 & * \\
* & *& 0 & * & * & 0 \\
\end{smallmatrix}\right).
\]
\end{proposition}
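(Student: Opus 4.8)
The plan is to invoke Comaschi's criterion (Theorem~\ref{tComaschi}) together with the results already proved for the hammer format and in Lemma~\ref{lNS1}, exploiting the hypothesis that $M$ is strictly semistable through its two halves: $M$ is not stable, and $M$ is not unstable. First I would use that $M$ is not stable, so that by Theorem~\ref{tComaschi} it has one of the skew formats $\mathrm{NS}_1,\mathrm{NS}_2,\mathrm{NS}_3$. Since Comaschi gathers zeros in the upper-left corner while we gather them in the lower-right, I pass to reverse matrices. A direct inspection shows that $\mathrm{NS}_1^{\mathrm{rev}}$ is precisely the format of Lemma~\ref{lNS1}, that $\mathrm{NS}_3^{\mathrm{rev}}$ is precisely the hammer format, and that $\mathrm{NS}_2^{\mathrm{rev}}$ has a zero in every position where the hammer format has one (in fact strictly more, so that it is a sub-form of the hammer format). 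Using that a matrix of format $F$ also has format $F^{\mathrm{rev}}$, I conclude that in the $\mathrm{NS}_1$ case $M$ has the format of Lemma~\ref{lNS1}, while in the $\mathrm{NS}_2$ and $\mathrm{NS}_3$ cases $M$ has hammer format.

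Next I would use that $M$ is not unstable, so that by Theorem~\ref{tComaschi} it has none of the unstable formats $(a),(b),(c)$. In the $\mathrm{NS}_1$ case Lemma~\ref{lNS1} would force $M$ into one of $(a),(b),(c)$, contradicting non-instability; hence only the $\mathrm{NS}_2$ and $\mathrm{NS}_3$ cases survive, and in both $M$ has hammer format. Proposition~\ref{pHammerFormat} then leaves only format $(e)$. At this point I would read off what the proof of Proposition~\ref{pHammerFormat} actually produces in its fourth branch: writing the hammer matrix as $\left(\begin{smallmatrix} N' & N \\ -N^t & 0\end{smallmatrix}\right)$ with $N'$ skew-symmetric, the relation $\Pf(M)=\det N=0$ and the Appendix put $N$, after a block substitution $S=\mathrm{diag}(P,Q)$, into honestly skew-symmetric form; since then $-N^t=N$, one obtains $SMS^t=\left(\begin{smallmatrix}\widetilde{N'} & \widetilde N \\ \widetilde N & 0\end{smallmatrix}\right)$ with $\widetilde{N'},\widetilde N$ skew-symmetric. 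Thus $M$ is double skew with vanishing lower-right block.

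Finally I would reverse rows and columns. Reversal carries a double skew matrix $\left(\begin{smallmatrix} N_0 & N_1 \\ N_1 & N_2\end{smallmatrix}\right)$ to $\left(\begin{smallmatrix} N_2^{\mathrm{rev}} & N_1^{\mathrm{rev}} \\ N_1^{\mathrm{rev}} & N_0^{\mathrm{rev}}\end{smallmatrix}\right)$, and the reverse of a $3\times 3$ skew-symmetric matrix is again skew-symmetric; applying it to the matrix just obtained yields a double skew matrix with vanishing \emph{upper}-left block and skew-symmetric off-diagonal and lower-right blocks, which is exactly the asserted format. Concretely, with $R$ the reversal permutation, $(RS)M(RS)^t$ is double skew of the required shape, so $M$ has the stated double skew format.

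The main obstacle is the bookkeeping in the fourth branch of Proposition~\ref{pHammerFormat}: one must verify that the format $(e)$ it delivers is a genuine \emph{double} skew matrix---that is, that the off-diagonal block $N$ can be made truly skew-symmetric (so that $-N^t=N$) while the lower-right block remains zero---rather than merely a matrix with zero diagonal in each $3\times 3$ block. Granting this, checking the inclusion of $\mathrm{NS}_2^{\mathrm{rev}}$ in the hammer format and the reversal identity for double skew matrices are short computations.
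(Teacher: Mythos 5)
Your proof is correct, and its skeleton --- Comaschi's trichotomy $\mathrm{NS}_1/\mathrm{NS}_2/\mathrm{NS}_3$, reversal of rows and columns, Lemma~\ref{lNS1} and Proposition~\ref{pHammerFormat}, exclusion of $(a),(b),(c)$ by semistability, and a final reversal of format $(e)$ --- is the paper's. The one genuine deviation is your treatment of $\mathrm{NS}_2$: you check that $\mathrm{NS}_2^{\mathrm{rev}}$ has zeros on the whole lower-right $3\times 3$ block (and more), hence is a sub-form of the hammer format, and you fold this case into Proposition~\ref{pHammerFormat}. That containment is correct, and it makes the argument more uniform, with two of the three branches funneled through one proposition. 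The paper instead writes an $\mathrm{NS}_2$-matrix in block form $\left(\begin{smallmatrix}0&0&C\\ 0&D&B\\ -C^t&-B^t&A\end{smallmatrix}\right)$, factors $\Pf(M)=\det(C)\cdot\Pf(D)$, and shows that either vanishing alternative forces one of the unstable formats $\mathrm{NSS}_i$; this costs a small extra computation but buys the sharper fact that a strictly semistable matrix with vanishing Pfaffian can never have format $\mathrm{NS}_2$ at all (only $\mathrm{NS}_3$ survives), whereas your route leaves open whether that branch is nonempty --- irrelevant for the statement being proved. Finally, your scruple about whether format $(e)$ is honestly double skew is well placed and correctly resolved: the Appendix entry used in Proposition~\ref{pHammerFormat} is Testa's theorem, which supplies $S,T$ with $SNT$ genuinely skew-symmetric, so that conjugating by $\mathrm{diag}(S,T^t)$ preserves the zero block and the skewness of the other diagonal block; the paper silently absorbs this into the statement of Proposition~\ref{pHammerFormat} together with the footnote of Table~\ref{tFormats} declaring $(e)$ a double skew format. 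Your reversal bookkeeping for double skew matrices is likewise correct.
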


\begin{proof}
By Theorem \ref{tComaschi} we know that $M$ has format $NS_1$, $NS_2$ or $NS_3$.  By assumption $\Pf(M) =0$. 
We treat each case in turn:

\fbox{$NS_1$:} 
If $M$ has format $NS_1$ we can reverse rows and columns and apply Lemma \ref{lNS1}. It follows that $M$ has format $(a)$, $(b)$ or $(c)$ in Table \ref{tFormats}. But then format $(a)$ is the reverse of $NSS_1$, format $(b)$ is reverse of $NSS_3$ and format $(c)$ is the reverse $NSS_2$. Therefore $M$ must be unstable. 

\fbox{$NS_2$:} 
We can assume, that $M$ is of the form
\[
	M = \begin{pmatrix}
		0 & 0 & C \\
		0 & D & B \\
		-C^t & -B^t & A
		\end{pmatrix}
\]
with $2 \times 2$ matrices $A,B,C,D$ of which $A$ and $D$ are skew. We then have
\[
	0 = \Pf(M) = \det(C) \cdot \Pf(D) 
\]
If $\Pf(D) = 0$ then $D=0$ since $D$ is a $2 \times 2$ matrix. But then $M$ has format $NSS_3$ and is therefore unstable. 
This leaves the case $\det(C) =0$. By reversing the formats in Table \ref{tKnownFormats} we know that the matrix $C$ has format
\[
\left(\begin{smallmatrix} 0 &  * \\  0 & *\end{smallmatrix}\right) \quad \text{or} \quad
\left(\begin{smallmatrix} 0 & 0 \\  * & *\end{smallmatrix}\right).
\]
In the first case $M$ has format $NSS_2$, in the second case $NSS_1$. Again $M$ must be unstable.

\fbox{$NS_3$:} 
In this case $M$ has hammer format and $\Pf(M) = 0$, so we are (after reversing rows and columns of $M$) in the situation of Proposition \ref{pHammerFormat}. It follows that $M$ has format  $(a)$, $(b)$, $(c)$ or $(e)$ in Table \ref{tFormats}. $(a)$, $(b)$ and $(c)$ are again unstable, so that $M$ must have format double skew format $(e)$. Reversing format $(e)$ we obtain the claim. 
\end{proof}

\begin{proposition} \label{pSemiStable}
Let $M$ be a strictly semistable, skew-symmetric $6 \times 6$ matrix of linear forms with vanishing Pfaffian. Let
$r = \dim \P \langle m_{01},\dots,m_{45} \rangle$ the dimension of the projective space spanned by the entries of $M$.
Then $r \in \{2,3,4,5\}$ and there exist a matrix $S \in \GL_6(\C)$ and linear forms $x_0,\dots,x_5$ such that 
\[
S^tMS = \left(\begin{smallmatrix}
0 & 0 &  0 & 0 & x_0  & x_1\\
0 & 0 & 0 & -x_0 & 0 & x_2\\
 0 & 0 & 0 & -x_1 &  -x_2 & 0\\
 0 & x_0 & x_1  & 0 & x_3 & x_4\\
 -x_0 & 0 & x_2 & -x_3& 0 & x_5 \\
-x_1 & x_2 & 0 & -x_4 & -x_5 & 0 \\
\end{smallmatrix}\right)
\]
with $x_0,\dots,x_r$ linearly independent and $x_i = 0$ for $i > r$.
\end{proposition}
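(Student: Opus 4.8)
The plan is to begin from the shape supplied by Proposition~\ref{pSemiStableDoubleSkew} and then run an explicit reduction inside the subgroup of $\GL_6(\C)$ preserving that shape. After that Proposition we may assume $M=\left(\begin{smallmatrix}0&N_1\\N_1&N_2\end{smallmatrix}\right)$ is double skew with $N_0=0$ and $N_1,N_2$ skew $3\times3$ matrices of linear forms. Identifying a skew $3\times3$ matrix $N$ with the vector $n$ for which $Nw=n\times w$, write $N_1\leftrightarrow u$ and $N_2\leftrightarrow v$; the datum is the pair $(u,v)$ of $3$-vectors of linear forms. The relevant group is the $H\subset\GL_6(\C)$ generated by the block congruences $\operatorname{diag}(R,R)$, $R\in\GL_3$ (acting by $N_i\mapsto RN_iR^t$, i.e.\ simultaneously on $u$ and $v$ by $\det(R)R^{-t}$), and by the lower shears $\left(\begin{smallmatrix}I&0\\C&I\end{smallmatrix}\right)$, $C\in\mathrm{Mat}_3(\C)$ (which fix $N_1$ and send $N_2\mapsto N_2+CN_1+N_1C^t$). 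Since $\Pf(M)$ equals, up to sign, $\det(n^{(0)},n^{(1)},n^{(2)})=\det(0,u,v)$, it vanishes automatically once $N_0=0$; thus the Pfaffian hypothesis is already consumed, and what remains is a normal-form problem for $(u,v)$ under $H$, with strict semistability as the only further input.

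The crux is to show that strict semistability forces $\dim\langle N_1\rangle=\dim\langle u_0,u_1,u_2\rangle=3$, which is exactly what makes $x_0,x_1,x_2$ independent. I would prove the contrapositive: if $d:=\dim\langle u\rangle\le2$ then $M$ is unstable in the sense of Theorem~\ref{tComaschi}. If $d=0$ then $N_1=0$ and the first three rows of $M$ vanish; if $d=1$ then $u=x\,n^0$ for a constant $n^0$, the vector $(n^0,0)$ lies in $\ker M$, and a constant change of basis produces a zero row; both cases give format $\mathrm{NSS}_1$. If $d=2$, choose a frame with $u=(u_0,u_1,0)$; then $Me_0=u_1e_5$ and $Me_1=-u_0e_5$, so $M$ carries $\langle e_0,e_1\rangle$ into $\langle e_5\rangle$, while $\langle e_0,e_1\rangle^{\perp}$ (with respect to every $M(\xi)$) equals $\langle e_0,\dots,e_4\rangle$; this is precisely Comaschi's format $\mathrm{NSS}_2$, so $M$ is unstable. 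Hence $\dim\langle N_1\rangle=3$. I expect this to be the main obstacle: it is the only genuinely non-formal ingredient, and one must correctly match each degenerate configuration to an unstable format of Theorem~\ref{tComaschi}.

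With $\dim\langle u\rangle=3$ the entries $u_0,u_1,u_2$ form a basis of $\langle N_1\rangle$, and I would use the shears to clean up $v$. The identity $C[u]+[u]C^t=[\,((\operatorname{tr} C)I-C)u\,]$, together with the invertibility of $C\mapsto(\operatorname{tr} C)I-C$, shows that $\{CN_1+N_1C^t:C\in\mathrm{Mat}_3(\C)\}$ is exactly the set of skew matrices $[w]$ whose vector $w$ has all three components in $\langle u\rangle$. Subtracting the appropriate such $w$, we may assume every component of $v$ lies in a fixed complement $U'$ of $\langle u\rangle$ inside the span $W$ of all entries. Writing $\tilde v$ for the result, $\langle\tilde v\rangle\subseteq U'$ has dimension $(r+1)-3=r-2$, which gives at once $r\ge2$ and (three components) $r\le5$. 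A final congruence $\operatorname{diag}(R,R)$ performs simultaneous row operations on the common $\C^3$-index of $u$ and $\tilde v$; I use it to put $\tilde v$ in staircase position, so that only $r-2$ of its components survive, while the same $R$ merely re-bases $u$ inside $\langle u\rangle$ and keeps its three entries independent. Performing the shears before this last congruence avoids any clash, since mixing $\tilde v$ within $U'$ never reintroduces an $\langle u\rangle$-component.

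It remains to read off the statement. Relabel the signed entries of $N_1$ as $x_0,x_1,x_2$ and those of $N_2=[\tilde v]$ as $x_3,x_4,x_5$, with the staircase arranged so that $x_i=0$ for $i>r$; the surviving forms $x_0,\dots,x_r$ span $W$ and, being $r+1$ forms in an $(r+1)$-dimensional space, are linearly independent. This is exactly the double-skew matrix displayed in the Proposition. In summary, the entire weight of the argument sits in the semistability step giving $\dim\langle N_1\rangle=3$; the passage from there to the explicit normal form is a formal reduction using only the congruences and shears in $H$.
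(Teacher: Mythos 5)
Your proposal is correct and follows essentially the same route as the paper: start from the double-skew shape of Proposition \ref{pSemiStableDoubleSkew}, rule out $\dim\langle N_1\rangle\le 2$ via the unstable formats of Theorem \ref{tComaschi}, remove the part of $N_2$ lying in $\langle N_1\rangle$ by a lower shear, and normalize with a simultaneous congruence $\mathrm{diag}(R,R)$ --- indeed, your cross-product computation proves exactly the step the paper only asserts (the existence of $T$ with $TB-B^tT^t=A'$). One small correction: the identity should read $C[u]+[u]C^t=[((\operatorname{tr}C)I-C^t)u]$ (transpose on $C$), but since $C\mapsto(\operatorname{tr}C)I-C^t$ is still a linear automorphism of $\mathrm{Mat}_3(\C)$, your argument is unaffected.
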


\begin{proof}
We write $M$ as
\[
	M = \begin{pmatrix} 0 & B \\ -B^t & A \end{pmatrix}
\]
with $A,B$ skew-symmetric $3 \times 3$ matrices of linear forms. If the entries of $B$ are linearly dependent, $B$ has skew format
\[
\left(\begin{smallmatrix}
 0 & 0 & *  \\
 0 & 0 & * \\
* & * & 0  \\
\end{smallmatrix}\right).
\]
and hence $M$ has format $NSS_2$. So the entries of $B$ must be linearly independent.

\medskip 

\noindent
We can now write $A = A' + A''$ such that  
\begin{enumerate}
\item $A'$ and $A''$ are skew-symmetric $3 \times 3$ matrices of linear forms.
\item $\langle A' \rangle \subset \langle B \rangle$, i.e the entries of $A'$ are in the span of the entries of $B$.
\item $\langle A'' \rangle \cap \langle B \rangle = \{0\}$, i.e. the entries of $A''$ are independent of the entries of $B$. 
\end{enumerate}
Because of condition $b)$ there exists a not necessarily invertible $3 \times 3$ matrix $T$ such that
\[
	TB- B^tT^t = A'.
\]
Setting
\[
	S = \begin{pmatrix} \id & 0\\ -T & \id \end{pmatrix}
\]
we have
\[
	S M S^t = \begin{pmatrix}
			0 & B \\
			-B^t & A - TB + B^tT^t 
			\end{pmatrix} 
			=
			\begin{pmatrix}
			0& B \\
			-B^t & A''
			\end{pmatrix} 
\]
We can therefore choose a basis of $x_3,\dots,x_r$ of $\langle A'' \rangle$ and assume that after another $\GL_6(\C)$ operation 
\[
	A''  = \left(\begin{smallmatrix}
 0 & x_3 & x_4  \\
 -x_3 & 0 & x_5 \\
-x_4 & -x_5 & 0  \\
\end{smallmatrix}\right).
\]
with $x_i = 0$ for $i>r$. We now set $x_0=b_{01}, x_1= b_{02}$ and $x_2 = b_{12}$, i.e
\[
	B = \left(\begin{smallmatrix}
 0 & x_0 & x_1  \\
 -x_0 & 0 & x_2 \\
-x_1 & -x_2 & 0  \\
\end{smallmatrix}\right).
\]
Since the entries of $B$ are linearly independent and $\langle B \rangle \cap \langle A'' \rangle  = \{0\}$ we
have that $x_0,\dots, x_r$ are also linearly independent and $S^tMS$ has the claimed form. 
\end{proof}

\begin{figure}
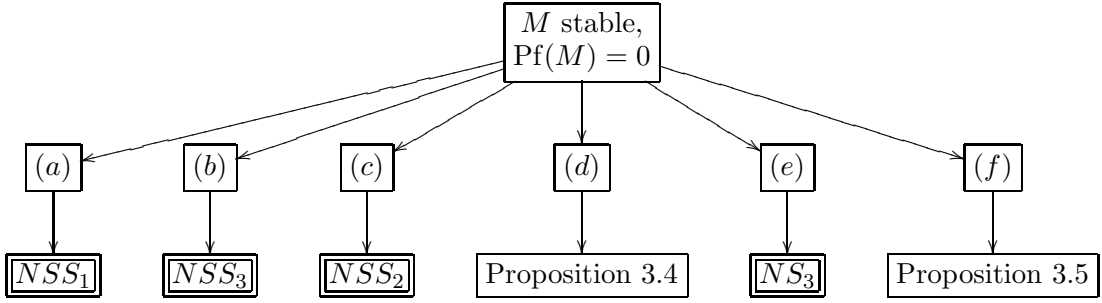

\xycenter{
	&&&*+[F]\txt{$M$ stable, \\$\Pf(M) = 0$} 
	\ar[dlll] \ar[dll] \ar[dl]  \ar[d] \ar[dr] \ar[drr]
	\\
	*+[F]\txt{$(a)$} \ar[d]
	&*+[F]\txt{$(b)$} \ar[d]
	&*+[F]\txt{$(c)$} \ar[d]
	&*+[F]\txt{$(d)$} \ar[d]
	&*+[F]\txt{$(e)$} \ar[d]
	&*+[F]\txt{$(f)$} \ar[d]
	\\
	*+[F=]\txt{$NSS_1$}
	&*+[F=]\txt{$NSS_3$}
	&*+[F=]\txt{$NSS_2$}
	&*+[F]\txt{Proposition \ref{pStableD}}
	&*+[F=]\txt{$NS_3$}
	&*+[F]\txt{Proposition \ref{pStableF}}
	\\
}
\caption{Flowchart for the classification of stable matrices  $M$ with $\Pf(M) = 0$} \label{fFlowStable}
\end{figure}

\begin{proposition} \label{pStableD}
If $M$ is a stable skew-symmetric $6 \times 6$ matrix of linear forms of format $(d)$ then
after an $\GL_6(\C)$ operation we have
\[
M =  \left(
\begin{smallmatrix}
      0&{l}_{0}&{l}_{1}\\
      {-{l}_{0}}&0&{l}_{2}\\
      {-{l}_{1}}&{-{l}_{2}}&0\\
      &&&0&{m}_{0}&{m}_{1}\\
      &&&{-{m}_{0}}&0&{m}_{2}\\
      &&&{-{m}_{1}}&{-{m}_{2}}&0
\end{smallmatrix}
\right)
\]
with $\dim \langle l_0,l_1,l_2 \rangle = \dim \langle m_0,m_1,m_2 \rangle=3$.
\end{proposition}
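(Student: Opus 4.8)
The plan is to run the stability criterion of Comaschi (Theorem~\ref{tComaschi}) in contrapositive form: a stable matrix carries none of the skew formats $\mathrm{NS}_1,\mathrm{NS}_2,\mathrm{NS}_3$. Since $M$ has format $(d)$ and stability is $\GL_6(\C)$-invariant, I would assume from the outset that $M$ is already block-diagonal,
\[
M = \begin{pmatrix} N_1 & 0 \\ 0 & N_2\end{pmatrix},\qquad N_1 = \begin{pmatrix} 0 & l_0 & l_1 \\ -l_0 & 0 & l_2 \\ -l_1 & -l_2 & 0\end{pmatrix},\quad N_2 = \begin{pmatrix} 0 & m_0 & m_1 \\ -m_0 & 0 & m_2 \\ -m_1 & -m_2 & 0\end{pmatrix},
\]
with $N_1,N_2$ skew $3\times 3$ matrices of linear forms. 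For such a block-diagonal matrix with two odd diagonal blocks the Pfaffian vanishes automatically: in the signed sum over perfect matchings of $\{0,\dots,5\}$ every matching must pair some index of the first block with one of the second, and all those entries are zero. Thus the hypothesis $\Pf(M)=0$ is vacuous here, and the whole content of the statement is that stability forces $\dim\langle l_0,l_1,l_2\rangle=\dim\langle m_0,m_1,m_2\rangle=3$. I would argue by contradiction, treating only the first block by symmetry.

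Suppose $\dim\langle l_0,l_1,l_2\rangle\le 2$. Then the entries of $N_1$ are linearly dependent, and by the classification of $3\times 3$ skew formats already invoked in Proposition~\ref{pSemiStable} (see Table~\ref{tKnownFormats}) there is $S_1\in\GL_3$ with $S_1N_1S_1^t$ of skew form $\left(\begin{smallmatrix} 0 & 0 & * \\ 0 & 0 & * \\ * & * & 0\end{smallmatrix}\right)$. Conjugating $M$ by $\mathrm{diag}(S_1,\mathrm{id}_3)\in\GL_6$ leaves $N_2$ untouched and replaces the first block by this two-zero form, so that rows $0$ and $1$ of $M$ are supported only in column $2$.

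The key observation is that this degeneracy creates a $3$-dimensional coordinate subspace on which the form vanishes identically. Indeed, the entries $M(e_i,e_j)$ with $i,j\in\{0,1,3\}$ are all zero: the vectors $e_0,e_1$ now pair only with $e_2$, while $e_3$ lies in the second block and is orthogonal to $e_0,e_1$, and the diagonal entries vanish. Swapping the indices $2$ and $3$ (a permutation matrix, hence a legitimate $SMS^t$ operation) moves this isotropic subspace to $\langle e_0,e_1,e_2\rangle$ and thereby places a zero $3\times 3$ block in the upper-left corner; that is, $M$ acquires skew format $\mathrm{NS}_3$. By Theorem~\ref{tComaschi} this makes $M$ non-stable, contradicting the hypothesis. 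Applying the identical argument with $\mathrm{diag}(\mathrm{id}_3,S_2)$ rules out $\dim\langle m_0,m_1,m_2\rangle\le 2$, and the two equalities $\dim=3$ are precisely the asserted normal form.

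I expect the only delicate point to be the bookkeeping in the last step: one must verify that the two-zero form of a degenerate $3\times 3$ skew block, combined with the automatic orthogonality of the two diagonal blocks, really does produce a $3$-dimensional isotropic coordinate subspace, and that the resulting zero pattern matches $\mathrm{NS}_3$ (rather than a larger, merely unstable, format) after the permutation — uniformly whether the span of the degenerate block is one- or two-dimensional. Everything else is a direct translation of Comaschi's criterion.
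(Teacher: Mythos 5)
Your proof is correct and follows essentially the same route as the paper: reduce the degenerate block to a two-zero skew form by a $\GL_3$ congruence inside the block-diagonal shape, then contradict stability via Theorem \ref{tComaschi}. The only cosmetic difference is the format exhibited at the end --- the paper permutes rows and columns so that the degenerate matrix visibly has the unstable format $\mathrm{NSS}_2$, whereas you exhibit the (weaker but sufficient) non-stable format $\mathrm{NS}_3$ via the isotropic subspace $\langle e_0,e_1,e_3\rangle$; both immediately give the required contradiction.
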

\begin{proof}
If $M$ is of format $(d)$ we have 
\[
M =  \left(
\begin{smallmatrix}
      0&{l}_{0}&{l}_{1}\\
      {-{l}_{0}}&0&{l}_{2}\\
      {-{l}_{1}}&{-{l}_{2}}&0\\
      &&&0&{m}_{0}&{m}_{1}\\
      &&&{-{m}_{0}}&0&{m}_{2}\\
      &&&{-{m}_{1}}&{-{m}_{2}}&0
\end{smallmatrix}
\right)
\]
with as yet no information about linear independence of the entries. If $\dim \langle l_0,l_1,l_2 \rangle < 3$ we have,
can obtain, after a $\GL_6(\C)$-operation
\[
M =\left(\begin{smallmatrix}
      0&{l}_{0}&{l}_{1}\\
      {-{l}_{0}}&0&0\\
      {-{l}_{1}}&0&0\\
      &&&0&{m}_{0}&{m}_{1}\\
      &&&{-{m}_{0}}&0&{m}_{2}\\
      &&&{-{m}_{1}}&{-{m}_{2}}&0
\end{smallmatrix}
\right)
\]
which is unstable because it has format $NSS_2$.
\end{proof}

\begin{proposition} \label{pStableF}
If $M$ is a stable skew-symmetric $6 \times 6$ matrix of linear forms of format $(f)$ that is not of format $(d)$, then
after a $\GL_6(\C)$-operation we have
\[
M =  \left(
 \begin{smallmatrix}
       &{l}_{3}&&0&{l}_{0}&{l}_{1}\\
      {-{l}_{3}}&&&{-{l}_{0}}&0&{l}_{2}\\
      &&&{-{l}_{1}}&{-{l}_{2}}&0\\
      0&{l}_{0}&{l}_{1}&&{l}_{4}&\\
      {-{l}_{0}}&0&{l}_{2}&{-{l}_{4}}&&\\
      {-{l}_{1}}&{-{l}_{2}}&0&&&
\end{smallmatrix}
\right)
\]
with $l_0,\dots, l_4$ linearly independent.
\end{proposition}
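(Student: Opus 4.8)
The plan is to recognise that the asserted normal form is nothing but double skew format $(f)$ together with linear independence of the five entries, so that the whole statement collapses to a span computation. By format $(f)$ I may assume, after a $\GL_6(\C)$-operation, that
\[
M = \begin{pmatrix} N_0 & N_1 \\ N_1 & N_2 \end{pmatrix}, \quad N_0 = l_3 J,\quad N_2 = l_4 J,\quad N_1 = \begin{pmatrix} 0 & l_0 & l_1 \\ -l_0 & 0 & l_2 \\ -l_1 & -l_2 & 0\end{pmatrix},
\]
where $J = \left(\begin{smallmatrix} 0 & 1 & 0 \\ -1 & 0 & 0 \\ 0 & 0 & 0\end{smallmatrix}\right)$ and $l_0,\dots,l_4 \in V^*$. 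This is already the matrix of the statement. Since $\langle M\rangle = \langle l_0,\dots,l_4\rangle$ and $\langle M\rangle$ is a $\GL_6(\C)$-invariant subspace of $V^*$, the Proposition is equivalent to the single assertion $\dim\langle l_0,\dots,l_4\rangle = 5$: five forms spanning a $5$-dimensional space are automatically independent, and then no further normalisation is needed.

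To prove independence I would argue by contradiction, showing that every linear relation among $l_0,\dots,l_4$ forces $M$ either into format $(d)$ (excluded by hypothesis) or into one of Comaschi's non-stable formats $\mathrm{NS}_1,\mathrm{NS}_2,\mathrm{NS}_3$ of Theorem \ref{tComaschi} (excluded by stability). Two reductions are immediate and set the pattern. If $l_0 = 0$, then reading off the nonzero entries shows that $M$ is block diagonal with respect to the splitting $\langle e_0,e_1,e_5\rangle\oplus\langle e_2,e_3,e_4\rangle$, hence has format $(d)$; so $l_0\neq0$. If $l_1,l_2$ are linearly dependent, a congruence by $\mathrm{diag}(A,\mu)$ with $A\in\GL_2$ (which preserves format $(f)$ and moves $(l_1,l_2)$ by $A$ up to scalar) makes $l_2 = 0$, after which row and column $2$ contain the single entry $m_{23} = -l_1$; permuting it into the corner exhibits format $\mathrm{NS}_1$ (or, if $l_1=0$ as well, two zero rows and format $\mathrm{NSS}_1$), so $M$ is not stable. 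Hence $l_1,l_2$ are independent. The remaining possible relations — $l_0\in\langle l_1,l_2\rangle$, $l_3,l_4$ dependent, $\langle l_0,l_1,l_2\rangle\cap\langle l_3,l_4\rangle\neq 0$ — I would handle the same way, first using the format-$(f)$-preserving operations (the congruences $\mathrm{diag}(A,\mu)$ on the $\GL_3$-factor and the diagonal and swap part of the $\GL_2$-factor, which rescale and interchange $N_0$ and $N_2$) to reduce to a short normal list, and then matching each to a block re-grouping giving $(d)$ or to a reversed $\mathrm{NS}_i$.

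It is convenient to import the genericity already available from Proposition \ref{pArrowFormat}: the matrix produced there has $l_0 = \lambda c$, $l_1 = a$, $l_2 = b$, $l_3 = m_{01}'$, $l_4 = y_2$, with $a,b$ independent and $c\notin\langle y_0,y_1,y_2\rangle$. Then $l_0 = 0$ is exactly $\lambda = 0$, i.e. format $(d)$, so the hypothesis not-$(d)$ gives $\lambda\neq0$, and $l_0,l_1,l_2$ are then independent for free (as $a,b$ are independent and $c\notin\langle a,b\rangle$). This disposes of the otherwise delicate case $l_0\in\langle l_1,l_2\rangle$. What stability must still supply is $l_4\notin\langle l_1,l_2\rangle$ and $l_3\notin\langle l_0,l_1,l_2,l_4\rangle$; each failure I would convert into an $\mathrm{NS}_i$ by a one-parameter subgroup destabilising a suitable flag. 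Since stability already forces the span to be at least a $\P^3$, so $\dim\langle M\rangle\ge 4$, only the passage from $4$ to $5$ is genuinely at stake.

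The hard part is the bookkeeping of this case analysis, and in particular the recognition of format $(d)$. The subtlety is that $M$ can acquire format $(d)$ through a $\GL_6(\C)$-element lying outside the evident double-skew-preserving subgroup $\GL_2\times\GL_3$ — for instance the re-grouping $\langle e_0,e_1,e_5\rangle\oplus\langle e_2,e_3,e_4\rangle$ above, which interchanges the roles of the indices $2$ and $5$ — so the obvious invariants of the double-skew presentation do not see $(d)$, and each degenerate configuration has to be block-diagonalised (or destabilised) explicitly. These are finite, concrete linear-algebra verifications of the same flavour as the Pfaffian computations recorded in \cite{BB22M2}, and carrying them out yields the independence of $l_0,\dots,l_4$, hence the stated normal form.
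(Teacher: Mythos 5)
Your framing (reduce everything to $\dim\langle l_0,\dots,l_4\rangle=5$) and your first two reductions ($l_0=0$ gives the block re-grouping $\langle e_0,e_1,e_5\rangle\oplus\langle e_2,e_3,e_4\rangle$, hence format $(d)$; $l_1,l_2$ dependent gives a row with a single nonzero entry, hence $\mathrm{NS}_1$) are correct and agree with the paper, which disposes of dependent $l_1,l_2$ via Lemma \ref{lNS1}. But the proposal has two genuine gaps. First, the appeal to Proposition \ref{pArrowFormat} is circular: the hypotheses of the present Proposition are only ``format $(f)$, stable, not of format $(d)$'', while the parametrization $l_0=\lambda c$, $l_1=a$, $l_2=b$ with $a,b$ independent and $c\notin\langle y_0,y_1,y_2\rangle$ exists only inside one branch of the \emph{proof} of Proposition \ref{pArrowFormat}; an arbitrary format-$(f)$ matrix does not come equipped with that data, and assuming it hands you precisely the independence of $l_0,l_1,l_2$ that is part of what must be proved. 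Second, and more seriously, the cases carrying the real content --- relations mixing $l_3,l_4$ with $l_0,l_1,l_2$, for instance $l_3=l_1$ --- are only announced (``handle the same way'', ``a one-parameter subgroup destabilising a suitable flag''), never carried out. They cannot be handled by the operations you list (block-diagonal congruences $\mathrm{diag}(A,\mu)$ and permutations): a relation such as $l_3=l_1$ produces no zero row and no visible block pattern in the given presentation, so no reversed $\mathrm{NS}_i$ and no format $(d)$ can be exhibited by those moves alone.

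The paper's proof supplies exactly the missing tool. After the permutation $(0,1,2,3,4,5)\mapsto(0,3,1,4,5,2)$ it writes $M$ as a $3\times3$ matrix of $2\times2$ blocks
\[
M=\begin{pmatrix} 0 & A & l_1\cdot\id\\ -A & 0 & l_2\cdot\id \\ -l_1\cdot\id & -l_2\cdot\id & 0\end{pmatrix},
\qquad
A=\left(\begin{smallmatrix} l_3 & l_0\\ l_0 & l_4\end{smallmatrix}\right)\ \text{symmetric},
\]
decomposes $A=A'+l_1A''+l_2A'''$ with $A'',A'''$ constant symmetric matrices and $\langle A'\rangle\cap\langle l_1,l_2\rangle=\{0\}$, and removes $l_1A''+l_2A'''$ by the unipotent congruence $S=\begin{pmatrix}\id&0&A'''\\0&\id&-A''\\0&0&\id\end{pmatrix}$ --- a $\GL_6(\C)$-element far outside the evident double-skew-preserving subgroup. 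After this step any failure of independence is equivalent to $\dim\langle A'\rangle<3$, and a single $\GL_2$-congruence puts the degenerate symmetric matrix $A'$ into one of only two shapes: $\left(\begin{smallmatrix}a&b\\b&0\end{smallmatrix}\right)$, which yields $\mathrm{NS}_3$ and contradicts stability, or $\left(\begin{smallmatrix}a&0\\0&b\end{smallmatrix}\right)$, which yields format $(d)$ and contradicts the hypothesis. You correctly identified the subtlety --- that $(d)$ and the $\mathrm{NS}_i$ only become visible after a move outside the double-skew stabiliser --- but without this shear-and-classify mechanism (or an equivalent one) your case analysis does not close.
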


\begin{proof}
Since $M$ is of format $(f)$ we can assume
\[
M =  \left(
 \begin{smallmatrix}
       &{l}_{3}&&0&{l}_{0}&m\\
      {-{l}_{3}}&&&{-{l}_{0}}&0&n\\
      &&&{-m}&{-n}&0\\
      0&{l}_{0}&m&&{l}_{4}&\\
      {-{l}_{0}}&0&n&{-{l}_{4}}&&\\
      {-m}&{-n}&0&&&
\end{smallmatrix}
\right)
\]
If $m$ and $n$ are linearly dependent we are in the situation of Lemma $\ref{lNS1}$ and $M$ is unstable. 
We can therefore assume that $m$ and $n$ are linearly independent. 

Applying the permutation $(0,1,2,3,4,5) \mapsto (0,3,1,4,5,2)$ to rows and columns we can write 
$M$ as a $3 \times 3$ matrix of symmetric $2 \times 2$ matrices
\[
	M = \begin{pmatrix}
		0 & A & m \cdot\id \\
		-A & 0 & n\cdot\id \\
		-m\cdot\id & -n\cdot\id & 0
		\end{pmatrix}
\]
with $A = \left(\begin{smallmatrix} l_3 & l_0 \\ l_0 & l_4 \end{smallmatrix} \right)$ a \emph{symmetric} $2\times 2$-matrix \cite{BB22M2}. We now write
\[
	A = A' + mA'' + nA'''
\]
with $A''$ and $A'''$ symmetric $2 \times 2$ matrices over $\C$ and $A'$ a symmetric $2 \times 2$ matrix of 
linear forms with $\langle A' \rangle \cap \langle m,n \rangle = \{0\}$. With
\[
	S = \begin{pmatrix}
		\id & 0 & A''' \\
		0 & \id & -A'' \\
		0 & 0 & \id
	\end{pmatrix}
\]
we have
\begin{align*}
M' = S M S^t &= \begin{pmatrix}
		0 & A - mA'' - nA'''& m \cdot \id \\
		-(A-mA''-nA''')& 0 & n\cdot id \\
		-m\cdot\id & -n\cdot\id & 0
		\end{pmatrix}\\
	     &= \begin{pmatrix}
		0 & A ' & m\cdot\id \\
		-A' & 0 & n\cdot\id \\
		-m\cdot\id & -n\cdot\id & 0
		\end{pmatrix} 
\end{align*}
compare \cite{BB22M2}. 
Assume now to the contrary
\[
5 > \dim \langle M \rangle = \dim \langle m,n \rangle + \dim \langle A' \rangle = 2 + \dim \langle A' \rangle
\]
and therefore $\dim \langle A' \rangle < 3$. We then have a matrix $T \in \GL_2(\C)$ such that
$T A' T^t$ is 
\[
	\left( \begin{smallmatrix} a & b \\ b & 0 \end{smallmatrix} \right)
	\quad \text{or} \quad
	\left( \begin{smallmatrix} a & 0\\ 0 & b \end{smallmatrix} \right)
\]
with $a, b$ possibly zero linear forms. 
Operating with 
\[
	\begin{pmatrix}
	 T^t & 0 & 0 \\
	 0 & T^t & 0 \\
	 0 & 0 & T^{-1}
	 \end{pmatrix}
\]
we see that we can assume that $A'$ is 
$\left(\begin{smallmatrix} a & b \\ b & 0 \end{smallmatrix} \right)$
or
$\left( \begin{smallmatrix} a & 0\\ 0 & b \end{smallmatrix} \right)$. 
We apply the reverse of the permutation above. In the first case we obtain
\[
\left(
 \begin{smallmatrix}
       &{a}&&0&b&m\\
      -{a}&&&{-b}&0&n\\
      &&&{-m}&{-n}&0\\
      0&b&m&&0&\\
      {-b}&0&n&&&\\
      {-m}&{-n}&&&&
\end{smallmatrix}
\right)	
\]
and $M$ is not stable by $NS_3$. In the second case we obtain
\[
\left(
 \begin{smallmatrix}
       0&{a}&0&0&0&m\\
      -{a}&0&0&0&0&n\\
      0&0&0&{-m}&{-n}&0\\
      0&0&m&0&b&0\\
      0&0&n&-b&0&0\\
      {-m}&{-n}&0&0&0&0
\end{smallmatrix}
\right)	
\]
rearranging the rows and columns again, we get
\[
\left(
 \begin{smallmatrix}
       0&{a}&m&0&0&0\\
      -{a}&0&n&0&0&0\\
      -m&-n&0&0&0&0\\
      0&0&0&0&{-m}&{-n}\\
      0&0&0&m&0&b\\
      0&0&0&n&-b&0\\
\end{smallmatrix}
\right)	
\]	
and $M$ has format $(d)$, contrary to our assumption. 
We therefore must have $\dim \langle M \rangle = 5$ as claimed. 
\end{proof}

\section{$\P^4$}

For applications semistable $6 \times 6$ matrices of linear forms over $\P^4$ with vanishing Pfaffian seem to be particularly interesting. We collect here the normal forms of such matrices for future reference.

\newcommand{\Ytop}{Y^{\mathrm{top}}}

\begin{table}
\begin{tabular}{|c|c|c|c| c|}
\hline
& $M$ & $\Ytop$ & $d_4$ &  \text{stability} \\
\hline
 (a)
 &
  $\left(
 \begin{smallmatrix}
       &{l}_{3}&&0&{l}_{0}&{l}_{1}\\
      {-{l}_{3}}&&&{-{l}_{0}}&0&{l}_{2}\\
      &&&{-{l}_{1}}&{-{l}_{2}}&0\\
      0&{l}_{0}&{l}_{1}&&{l}_{4}&\\
      {-{l}_{0}}&0&{l}_{2}&{-{l}_{4}}&&\\
      {-{l}_{1}}&{-{l}_{2}}&0&&&
\end{smallmatrix}
\right)$
&
a smooth conic
&
10
&
stable
\\ \hline
 (b)
 &
$\left(
\begin{smallmatrix}
      0&{l}_{0}&{l}_{1}\\
      {-{l}_{0}}&0&{l}_{2}\\
      {-{l}_{1}}&{-{l}_{2}}&0\\
      &&&0&{l}_{2}&{l}_{3}\\
      &&&{-{l}_{2}}&0&{l}_{4}\\
      &&&{-{l}_{3}}&{-{l}_{4}}&0
\end{smallmatrix}
\right)$
&
two skew lines
&
9
&
 stable
\\ \hline
 (c)
 &
$\left(
\begin{smallmatrix}
      0&{l}_{0}&{l}_{1}\\
      {-{l}_{0}}&0&{l}_{2}\\
      {-{l}_{1}}&{-{l}_{2}}& 0\\
      &&&0&{l}_{1}&{l}_{2}\\
      &&&{-{l}_{1}}&0&{l}_{3}\\
      &&&{-{l}_{2}}&{-{l}_{3}}&0
\end{smallmatrix}
\right)$
&
 \begin{tabular}{c}
two distinct \\ intersecting lines
\end{tabular}
&
8
&
stable
\\ \hline
 (d)
 &
$\left(
\begin{smallmatrix}
&  &  & 0 &l _0 &l_1 \\
 &  &  & -l_0 & 0 & l_2  \\
 &  &  & -l_1 & -l_2 & 0 \\
0 &l _0 &l_1 & & l_3 & l_4\\
-l_0 & 0 & l_2  & -l_3 &  & \\
-l_1 & -l_2 & 0 & -l_4 &  & 
\end{smallmatrix}
\right)$
&
  \begin{tabular}{c}
  a double line lying on \\
a smooth quadric surface\\
\end{tabular}
&
9
&
\begin{tabular}{c}
strictly semistable,\\ but not polystable
\end{tabular}
\\ \hline
(e)
 &
$\left(
\begin{smallmatrix}
&  &  & 0 &l _0 &l_1 \\
 &  &  & -l_0 & 0 & l_2  \\
 &  &  & -l_1 & -l_2 & 0 \\
0 &l _0 &l_1 & & l_3 & \\
-l_0 & 0 & l_2  & -l_3 &  & \\
-l_1 & -l_2 & 0 &  &  & 
\end{smallmatrix}
\right)$
&
a plane double line
&
8
&
\begin{tabular}{c}
strictly semistable,\\ but not polystable
\end{tabular}
\\ \hline
(f) 
&
$\left(
\begin{smallmatrix}
&  &  & 0 &l _0 &l_1 \\
 &  &  & -l_0 & 0 & l_2  \\
 &  &  & -l_1 & -l_2 & 0 \\
0 &l _0 &l_1 & &  & \\
-l_0 & 0 & l_2  &  &  & \\
-l_1 & -l_2 & 0 &  &  & 
\end{smallmatrix}
\right)$
&
\begin{tabular}{c}
 a line
together with its \\
full first order \\
infinitesimal \\
neighbourhood
\end{tabular}
&
6
&
polystable
\\ \hline
\end{tabular}
\medskip

\caption{Semistable matrices $M$ with vanishing Pfaffian over $\P^4$}
\label{tPfaffZero}
\end{table}

\begin{theorem}
Let $M$ be a semistable skew-symmetric $6 \times 6$ matrix with vanishing Pfaffian over $\P^4$. Then there exists linearly independent linear forms
$l_0,\dots,l_4$ and a matrix  $S \in \GL_6(\C)$ such that $SMS^t$ is one of the matrices in Table \ref{tPfaffZero}.
\end{theorem}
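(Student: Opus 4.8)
The plan is to use that a semistable $M$ is either stable or strictly semistable and to reduce each case to the propositions already proved, the one genuinely new ingredient being the dimension bound coming from $\dim_{\C}V^{*}=5$ over $\P^{4}$. Suppose first that $M$ is strictly semistable. Then Proposition \ref{pSemiStable} produces $S\in\GL_{6}(\C)$ and linear forms $x_{0},\dots,x_{5}$ with $S^{t}MS$ equal to the double skew normal form displayed there, where $x_{0},\dots,x_{r}$ are linearly independent, $x_{i}=0$ for $i>r$, and $r=\dim\P\langle m_{01},\dots,m_{45}\rangle\in\{2,3,4,5\}$. Over $\P^{4}$ the forms $x_{0},\dots,x_{r}$ are $r+1$ independent vectors in the $5$-dimensional space $V^{*}$, so $r+1\le 5$ and hence $r\in\{2,3,4\}$. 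Renaming $x_{i}=l_{i}$ and inserting the vanishing $x_{i}=0$ for $i>r$, the three values $r=4$, $r=3$, $r=2$ are exactly the matrices in rows $(d)$, $(e)$, $(f)$ of Table \ref{tPfaffZero}, and these orbits are strictly semistable, with $(f)$ polystable, as recorded there.

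Suppose next that $M$ is stable. By Theorem \ref{tComaschi} the formats $(a),(b),(c)$ of Table \ref{tFormats} are unstable and format $(e)$ is not stable, so Theorem \ref{tMain} leaves only formats $(d)$ and $(f)$. If $M$ has format $(f)$ but not $(d)$, then Proposition \ref{pStableF} already produces precisely the matrix in row $(a)$ of Table \ref{tPfaffZero}, with $l_{0},\dots,l_{4}$ linearly independent. If $M$ has format $(d)$, then Proposition \ref{pStableD} writes $M=N_{0}\oplus N_{2}$ with both $U_{i}:=\langle N_{i}\rangle$ three-dimensional. A $3\times 3$ skew block of full span can be normalised to the shape $\left(\begin{smallmatrix}0 & \ast & \ast\\ \ast & 0 & \ast\\ \ast & \ast & 0\end{smallmatrix}\right)$ with any prescribed basis of its span placed in the three off-diagonal slots, by an independent $\GL_{3}$-congruence on that block. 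Choosing bases of $V^{*}$ adapted to $U_{0}\cap U_{2}$ then yields row $(b)$ (two skew lines) when $\dim(U_{0}\cap U_{2})=1$ and row $(c)$ (two intersecting lines) when $\dim(U_{0}\cap U_{2})=2$.

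The main obstacle is to control $\dim(U_{0}\cap U_{2})$ in the format $(d)$ case. Inside $V^{*}\cong\C^{5}$ we have $\dim(U_{0}\cap U_{2})\ge 3+3-5=1$, so the only possibilities are $1$, $2$, $3$. The value $3$ means $U_{0}=U_{2}$, whence the entries of $M$ span only a $\P^{2}$, i.e. $r=2$; but a stable matrix spans at least a $\P^{3}$ (Comaschi; see the Introduction), so this case cannot occur. Concretely, in this case the two independent block-congruences make $N_{0}=N_{2}=:N$, and $N\oplus N\cong\left(\begin{smallmatrix}0 & N\\ N & 0\end{smallmatrix}\right)$ over $\C$ --- conjugate the outer tensor factor $\id_{2}$ to $\left(\begin{smallmatrix}0 & 1\\ 1 & 0\end{smallmatrix}\right)$ --- which is the polystable matrix of row $(f)$, again not stable. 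This leaves $\dim(U_{0}\cap U_{2})\in\{1,2\}$, completing the stable case. The residual work --- fixing signs and indices so that the normalisations land on the exact entries of Table \ref{tPfaffZero}, and confirming the $Y^{\mathrm{top}}$ and $d_{4}$ columns --- is routine and I would verify it directly.
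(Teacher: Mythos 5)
Your proof is correct and follows essentially the same route as the paper: split into the stable and strictly semistable cases, handle the latter via Proposition \ref{pSemiStable} with $r\le 4$ forced by $\dim_{\C} V^{*}=5$, and the former via Theorem \ref{tMain} (plus Comaschi's criterion) together with Propositions \ref{pStableF} and \ref{pStableD}. The only difference is that you spell out the step the paper treats as immediate --- how the block-diagonal form of Proposition \ref{pStableD} yields rows $(b)$ and $(c)$ of Table \ref{tPfaffZero} by analysing $\dim(U_{0}\cap U_{2})\in\{1,2\}$ and excluding the value $3$ via stability --- and that filled-in analysis is sound.
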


\begin{proof}
If $M$ is stable and $\Pf(M) = 0$ it must be of format $(d)$ or $(f)$ in Table \ref{tFormats}. If it is of format $(d)$ we obtain the normal form $(b)$ and $(c)$ of Table \ref{tPfaffZero} from Proposition \ref{pStableD}. If it is of format $(f)$ but not of format $(d)$ in Table \ref{tFormats} we obtain the normal form $(a)$ of Table \ref{tPfaffZero} from Proposition \ref{pStableF}.

\medskip

If $M$ is strictly semistable we can apply our Proposition \ref{pSemiStable}: 
The case $r=5$ cannot occur since we are in $\P^4$; $r=4$ gives case $(d)$, $r=3$ is case $(e)$ and $r=2$ is case $(f)$ in Table \ref{tPfaffZero}.
\end{proof}

\begin{proposition} \label{pOnlyOneNormalForm}
Let $M$ be a semi-stable skew-symmetric $6 \times 6$ matrix with vanishing Pfaffian over $\P^4$. Than $M$
is congruent to at most one of the normal forms in Table \ref{tPfaffZero},.
\end{proposition}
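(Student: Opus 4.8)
The plan is to distinguish the six normal forms in Table~\ref{tPfaffZero} by computing geometric invariants that are manifestly preserved under the $\GL_6(\C)$-action $M \mapsto SMS^t$, and showing that no two rows of the table share the same list of invariants. The most robust invariant to work with is the stability type, recorded in the last column: matrices of types $(a)$, $(b)$, $(c)$ are stable, those of type $(d)$, $(e)$ are strictly semistable but not polystable, and type $(f)$ is polystable. Since stability and polystability are preserved under the action, this immediately separates $\{(a),(b),(c)\}$ from $\{(d),(e)\}$ from $\{(f)\}$. It remains to distinguish the three stable forms from each other and the two non-polystable strictly semistable forms from each other.

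For the finer separation within each stability class I would use the degeneracy scheme $\Ytop$ (the subscheme of $\P^4$ where the rank of $M$ drops, equivalently the vanishing locus of the sub-Pfaffians / the support of the cokernel sheaf), together with the numerical invariant $d_4$ recorded in the table. These are genuine invariants of the congruence class because $S$ acts by a change of coordinates on $V$ and simultaneously on $V^*$, so it carries the degeneracy locus of $M$ isomorphically to that of $SMS^t$. Within $\{(a),(b),(c)\}$ the schemes $\Ytop$ are respectively a smooth conic, two skew lines, and two distinct intersecting lines; these are pairwise non-isomorphic as subschemes of $\P^4$ (a conic is irreducible while the other two are reducible, and skew versus intersecting lines are distinguished by whether the two components meet). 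Within $\{(d),(e)\}$ the schemes are a double line on a smooth quadric versus a plane double line, which one separates by the value $d_4 = 9$ versus $d_4 = 8$, or equivalently by the Hilbert polynomial of $\Ytop$.

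The main obstacle I anticipate is making the invariants $\Ytop$ and $d_4$ precise and verifying that they really are computed correctly for each normal form, so that the pairwise comparisons are rigorous rather than merely plausible. Concretely one must (i) fix the definition of $\Ytop$ and $d_4$ as invariants of the matrix of linear forms, (ii) confirm they are congruence invariants, and (iii) carry out the identification of $\Ytop$ for each of the six rows. Steps (i) and (ii) are formal, but step (iii) requires the explicit geometric analysis summarised in Table~\ref{tPfaffZero}, which I would take as established. Given that, the comparison reduces to the observation that the triple (stability type, isomorphism class of $\Ytop$, $d_4$) takes six pairwise distinct values across the six rows, and hence no matrix can be congruent to two different normal forms.

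To keep the argument self-contained I would organise it as follows. First, partition by stability into the three blocks above. Second, inside the stable block, separate $(a)$ from $(b),(c)$ by irreducibility of $\Ytop$, and separate $(b)$ from $(c)$ by whether the two line components are disjoint. Third, inside the non-polystable strictly semistable block, separate $(d)$ from $(e)$ by the invariant $d_4$. Since $(f)$ is the unique polystable case, it is automatically distinguished from all others. Concluding, each congruence class meets at most one row of Table~\ref{tPfaffZero}, which is precisely the assertion.
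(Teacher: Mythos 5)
Your core idea---distinguishing the six normal forms by invariants of the rank-two degeneracy locus that are manifestly preserved under $M \mapsto SMS^t$---is exactly the paper's proof: the paper computes the top-dimensional component $Y^{\mathrm{top}}$ of the rank-$2$ locus (with scheme structure given by the $4 \times 4$ Pfaffians) for each row of Table \ref{tPfaffZero} and concludes from the fact that the six schemes are pairwise distinct. Your use of $Y^{\mathrm{top}}$ and of $d_4$ is legitimate. The genuine problem is your \emph{primary} partitioning tool, the stability column of Table \ref{tPfaffZero}: at the point where Proposition \ref{pOnlyOneNormalForm} is proved, that column has not been established, and in the paper it \emph{cannot} be assumed, because the final theorem of the section, which proves the stability properties, cites Proposition \ref{pOnlyOneNormalForm} in an essential way. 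Concretely, stability of types $(a)$, $(b)$, $(c)$ is deduced there from semistability plus the fact that any strictly semistable matrix with vanishing Pfaffian is of type $(d)$, $(e)$ or $(f)$ (Proposition \ref{pSemiStable}), \emph{together with} the distinctness of the normal forms---i.e.\ the very statement you are proving. It also cannot be obtained from the numerical criterion of Proposition \ref{pStabilityViaD4}, which needs $d_4 \ge 13$, while these matrices have $d_4 = 10, 9, 8$. Similarly, the assertion that $(d)$ and $(e)$ are not polystable rests on type $(f)$ lying in the closure of, but not in, their orbits, which again presupposes distinctness. So your first separation step is circular as written.

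The gap is easily repaired using only the invariants you already invoke, with no appeal to stability: the pair $(Y^{\mathrm{top}}, d_4)$ already takes six pairwise distinct values. Type $(a)$ is the only case where $Y^{\mathrm{top}}$ is reduced and irreducible; $(b)$ and $(c)$ are the reduced reducible cases, distinguished by whether the two line components meet; and the non-reduced cases $(d)$, $(e)$, $(f)$ are separated by $d_4 = 9, 8, 6$ respectively (or, if you prefer geometry, by the linear span of the double line, $\P^3$ versus $\P^2$, and by the structure of case $(f)$). Note that $d_4$ alone does not suffice globally, since $(b)$ and $(d)$ share $d_4 = 9$ and $(c)$ and $(e)$ share $d_4 = 8$, so the reducedness of $Y^{\mathrm{top}}$ is genuinely needed to break these ties. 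With this substitution your argument becomes correct and is essentially the paper's proof, which relies on the Macaulay2 computation of $Y^{\mathrm{top}}$ recorded in the third column of Table \ref{tPfaffZero}.
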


\begin{proof}
Let $Y$ be the rank $2$ locus of $M$ with the scheme structure given by the $4 \times 4$ Pfaffians and
$\Ytop$ the top dimensional component of $Y$. We compute $\Ytop$ for all matrices in Table \ref{tPfaffZero} using 
{\tt Macaulay2} see \cite{BB22M2}. The results are recorded in the third column of Table \ref{tPfaffZero}.
Since $\Ytop$ is different in each case, the claim follows.
\end{proof}

To determine the stability properties of the normal forms in Table \ref{tPfaffZero} we provide two methods to prove that a given matrix is {\sl not} of a particular format. The first method is by looking at the dimension of the space of quadrics spanned by the $4 \times 4$ Pfaffians $M$ which we denote by $d_4(M) := \dim \Pf_4(M)$.  Notice that this is invariant under the $\mathrm{GL}_6(\C)$-action.

\begin{table}
\begin{tabular}{|c|c|c|c|c|}
\hline
& Format & $d_4$ & generalized rows 
\\ \hline
$\mathrm{NS}_1$ 
&
$\left(\begin{smallmatrix}
0 & 0 & 0 & 0 & 0 &* \\
0 & 0 & * & * & * & *  \\
0 & * & 0 & * & * & * \\
0 & * & * & 0 & * & * \\
0 & * & * & *  & 0 & * \\
* & * & * & * & * & 0 
\end{smallmatrix}\right) $
&
$\le 11$
&
$\P^0 \subset Z_1$
\\ \hline
$\mathrm{NS}_2$
&
$\left(\begin{smallmatrix}
0 & 0 & 0 & 0 & * &* \\
0 & 0 & 0 & 0 & * & *  \\
0 & 0 & 0 & * & * & * \\
0 & 0 & * & 0 & * & * \\
* & * & * & *  & 0 & * \\
* & * & * & * & * & 0 
\end{smallmatrix}\right) $
&
$\le 10$
&
$\P^1 \subset Z_2$
\\  \hline
$\mathrm{NS}_3$
&
$\left(\begin{smallmatrix}
0 & 0 & 0 & * & * &* \\
0 & 0 & 0 & * & * & *  \\
0 & 0 & 0 & * & * & * \\
* & * & * & 0 & * & * \\
* & * & * & *  & 0 & * \\
* & * & * & * & * & 0 
\end{smallmatrix}\right)$
&
$\le 12$
&
$\P^2 \subset Z_3$
\\ \hline
$\mathrm{NSS}_1$
&
$\left(\begin{smallmatrix}
0 & 0 & 0 & 0 & 0 & 0 \\
0 & 0 & * & * & * & *  \\
0 & * & 0 & * & * & * \\
0 & * & * & 0 & * & * \\
0 & * & * & *  & 0 & * \\
0 & * & * & * & * & 0 
\end{smallmatrix}\right)$
&
$\le 5$
&
$\P^0 \subset Z_0$
\\ \hline
$\mathrm{NSS}_2$
&
$\left(\begin{smallmatrix}
0 & 0 & 0 & 0 & 0 &* \\
0 & 0 & 0 & 0 & 0 & *  \\
0 & 0 & 0 & * & * & * \\
0 & 0 & * & 0 & * & * \\
0 & 0 & * & *  & 0 & * \\
* & * & * & * & * & 0 
\end{smallmatrix}\right)$
& 
$\le 7$
&
$\P^1 \subset Z_1$
\\ \hline
$\mathrm{NSS}_3$
&
$\left(\begin{smallmatrix}
0 & 0 & 0 & 0 & * &* \\
0 & 0 & 0 & 0 & * & *  \\
0 & 0 & 0 & 0 & * & * \\
0 & 0 & 0 & 0 & * & * \\
* & * & * & *  & 0 & * \\
* & * & * & * & * & 0 
\end{smallmatrix}\right)$
& 
$\le 6$
&
$\P^3 \subset Z_2$
\\ \hline
\end{tabular}
\caption{Invariants of not stable and not semi-stable formats}
\label{tNSSinvariants}
\end{table}

\begin{proposition} \label{pStabilityViaD4}
For the formats $\mathrm{NS_i}$ and $\mathrm{NSS_i}$ the invariant $d_4$ is as listed in Table \ref{tNSSinvariants}.
In particular if $d_4(M) \ge 8$ then $M$ is semi-stable and  if $d_4(M) \ge 13$ then $M$ is stable.
\end{proposition}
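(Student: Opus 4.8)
The plan is to reduce the computation of $d_4$ for each format to a purely combinatorial count of perfect matchings, and then to read off the two stability implications directly from Theorem~\ref{tComaschi}. Recall that $\Pf_4(M)$ is the span of the fifteen $4\times4$ sub-Pfaffians $\Pf(M_{ij})$, $0\le i<j\le5$, and that $d_4$ is invariant under $M\mapsto SMS^t$; so it suffices to bound $d_4$ for a matrix $M$ that is literally in the form $\mathrm{NS}_i$ or $\mathrm{NSS}_i$, and the resulting bound then holds for every matrix of that format.

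The key observation I would isolate first is the following. Deleting rows and columns $\{i,j\}$ leaves the $4\times4$ skew matrix on the complementary index set $S=\{0,\dots,5\}\setminus\{i,j\}$, whose Pfaffian is the signed sum $\sum\pm\,m_{ab}m_{cd}$ over the three perfect matchings $\{\{a,b\},\{c,d\}\}$ of $S$. Hence, if the format $F$ forces a zero into at least one entry of every such matching, then $\Pf(M_{ij})$ vanishes identically for every matrix of form $F$. Consequently $\Pf_4(M)$ is spanned by those sub-Pfaffians whose index set $S$ carries a perfect matching supported on the $*$-positions of $F$, and since the span of that many quadrics has at most that dimension, we get $d_4\le N_F$, where $N_F$ is the number of such $S$. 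Note that no nonvanishing or non-cancellation claim is needed: for the upper bound we only use that a structurally unmatched $S$ contributes the zero quadric.

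The next step is the bookkeeping. I would attach to $F$ the graph $G_F$ on $\{0,\dots,5\}$ whose edges are the $*$-positions, so that $N_F$ counts the four-element vertex subsets $S$ with $G_F[S]$ admitting a perfect matching, and then run through the six formats. For $\mathrm{NSS}_1$ the $0$-th row and column vanish, so only the $\binom{5}{4}=5$ subsets with $0\notin S$ can contribute, giving $d_4\le5$; for $\mathrm{NSS}_3$ every edge meets $\{4,5\}$, so a matching forces $\{4,5\}\subset S$ and only the $\binom{4}{2}=6$ subsets containing both $4$ and $5$ survive. The four remaining formats are handled by the same analysis: in each the graph splits into a part where two fixed vertices are the only possible ``hinges'' of a matching, and one counts which four-subsets can be completed, yielding $N_{\mathrm{NS}_1}=11$, $N_{\mathrm{NS}_2}=10$, $N_{\mathrm{NS}_3}=12$ and $N_{\mathrm{NSS}_2}=7$, exactly the entries of Table~\ref{tNSSinvariants}. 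These finite counts can also be confirmed directly in \texttt{Macaulay2}, cf.\ \cite{BB22M2}.

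Finally I would deduce the two consequences. By Theorem~\ref{tComaschi}, $M$ is unstable precisely when it has one of the formats $\mathrm{NSS}_1,\mathrm{NSS}_2,\mathrm{NSS}_3$, for all of which $d_4\le\max\{5,7,6\}=7$; so $d_4(M)\ge8$ forces $M$ to be semistable. Likewise $M$ fails to be stable precisely when it has one of $\mathrm{NS}_1,\mathrm{NS}_2,\mathrm{NS}_3$, for all of which $d_4\le\max\{11,10,12\}=12$; so $d_4(M)\ge13$ forces $M$ to be stable. The conceptual content is entirely in the reduction of the second paragraph, which is short; the step I expect to be the main source of error, and would therefore check most carefully, is the matching enumeration for the three $\mathrm{NS}$ graphs and for $\mathrm{NSS}_2$, where the induced subgraphs are dense enough that one must verify case by case exactly which four-subsets admit a perfect matching so that $N_F$ lands on the tabulated value rather than over- or under-counting.
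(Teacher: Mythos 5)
Your proposal is correct, and it reaches the proposition by a genuinely different route from the paper. The paper's proof computes $d_4$ for the \emph{generic} matrix of each format (linearly independent forms at the $*$-positions) with \texttt{Macaulay2}, treating only $\mathrm{NSS}_3$ by hand, and then uses the fact that specializing the entries can only decrease the dimension of the span of the $4\times 4$ Pfaffians; the stability implications are then drawn from Theorem \ref{tComaschi} exactly as you do. You instead bound $d_4$ structurally: each sub-Pfaffian $\Pf(M_{ij})$ is a signed sum over the three perfect matchings of the complementary index set $S$, so it vanishes identically unless the $*$-positions support a matching of $S$, giving $d_4\le N_F$ with $N_F$ the number of matchable four-element subsets. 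I verified your counts --- $N_{\mathrm{NS}_1}=11$, $N_{\mathrm{NS}_2}=10$, $N_{\mathrm{NS}_3}=12$, $N_{\mathrm{NSS}_1}=5$, $N_{\mathrm{NSS}_2}=7$, $N_{\mathrm{NSS}_3}=6$ --- and they all agree with Table \ref{tNSSinvariants}; since $\max\{5,7,6\}=7<8$ and $\max\{11,10,12\}=12<13$, the two implications follow. What your argument buys is complete independence from computer algebra and from the genericity/specialization step: the bound holds verbatim for every matrix of the given form by a finite, hand-checkable enumeration. What the paper's computation buys is the actual generic value of $d_4$ (implicitly, that the matchable sub-Pfaffians are generically linearly independent, so the tabulated bounds are sharp), which is not needed for this proposition but is in the spirit of the later uses of $d_4$. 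The only thin spot in your write-up is that the enumeration for $\mathrm{NS}_1$, $\mathrm{NS}_2$, $\mathrm{NS}_3$ and $\mathrm{NSS}_2$ is sketched rather than written out; as the stated values are correct, this is a matter of recording a finite check, not a gap.
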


\begin{proof}
We compute the invariant $d_4$ for the generic matrices of each format, i.e. those with linearly independent linear forms at the position of the $*$'s. For arbitrary entries the dimension of the space of quadrics spanned by the $4 \times 4$ Pfaffians can only be smaller than the dimension in the generic case. 

For example in the case $\mathrm{NSS_3}$ the $4 \times 4$ Pfaffians of $M$ are just the $2 \times 2$ minors of the $2 \times 4$ matrix at the position of the $*$'s. This gives at most $6$ independent $4 \times 4$ Pfaffians. In the other cases we compute $d_4$ in the generic case with {\tt Macaulay2} \cite{BB22M2}. 

If $d_4(M)$ is at least $8$ it cannot have one of the formats $\mathrm{NSS_i}$  since $d_4(M)$ is too large. Therefore it cannot be not semi-stable. Similarly if $d_4(M)$ is at least $13$ then $M$ cannot have formats $\mathrm{NS_i}$. Therefore in this case $M$ cannot be not stable.
\end{proof}

\begin{corollary} \label{cAtoEss}
The matrices of type $(a)$ to $(e)$ in Table \ref{tPfaffZero} are semi-stable.
\end{corollary}

\begin{proof}
We compute $d_4(M)$ for the matrices of type $(a)$ to $(f)$ in Table \ref{tPfaffZero}, see \cite{BB22M2}. The results are listed in the fourth
column of Table \ref{tPfaffZero}. We then apply Proposition \ref{pStabilityViaD4} in the cases $(a)$ to $(e)$.
\end{proof}

In some cases the invariant $d_4$ does not give enough information. For a finer invariant we introduce the
concept of generalized rows and their rank which we learned from David Eisenbud. 

\begin{definition}
Let $V,W$ be $\C$ vector spaces. After choosing bases of $V$ and $W$ we have a $1:1$ correspondence 
between $\dim W \times \dim W$ matrices of linear forms over $\P(V)$ and elements of
\[
	\Hom(V, W^* \otimes W^*).
\]
In this situation we call $\P(W)$ the space of {\sl generalized rows}. 
Considering the natural isomorphisms
\[
	\Hom(V, W^* \otimes W^*) \cong W^* \otimes W^* \otimes V^* \cong \Hom(W, V^*\otimes W^*)
\]
we see that (after choosing bases for $V$ and $W$) there is a $1:1$ correspondence between
$\dim W \times \dim W$ matrices of linear forms over $\P(V)$ and $\dim V \times \dim W$
matrices of linear forms over $\P(W)$. We denote this correspondence by
\[
	M \mapsto \widehat{M}.
\]
and call $\widehat{M}$ the {\sl flipped matrix} of $M$. 

We define the {\sl rank of a generalized row} $[w] \in \P(W)$ as the rank of $\widehat{M} (w)$. Indeed if
$[w]$ corresponds to a row of $M$, the rank of $[w]$ will be the dimension of the vector space spanned by the
linear forms in this row. We also let $Z_r \subset \P(W)$ be the space of generalized rows with rank at most $r$
with the scheme structure given by the $(r+1) \times (r+1)$ minors of $\widehat{M}$.

 For $S \in \GL(W)$ the operation $M \mapsto SMS^t$ corresponds to a change of basis in $W$. Therefore the
 schemes $Z_r(M)$ and $Z_r(SMS^t)$ in $\P(W)$ differ only by a linear transformation of $\P(W)$. 
 \end{definition} 

\begin{example}
Consider matrices of the form $\mathrm{NSS_3}$. The first four rows and all linear combinations of them have the first $4$ entries equal to 
zero. Therefore the space of  rank at most $2$ generalized rows $Z_2$ contains a $\P^3$. In the same way
the other formats of Table \ref{tNSSinvariants} have large linear spaces of low rank generalized rows. We have collected this information in Table \ref{tNSSinvariants}.
\end{example}

For a given matrix $M$ the flipped matrix can be easily computed:

\begin{proposition}
Let $V,W$ be $\C$-vector spaces with bases $l_0,\dots l_m$ of $V^*$ and $y_0,\dots,y_n$ of $W^*$. 
Let furthermore $M$ be a $\dim W \times \dim W$ matrix of linear forms over $\P(V)$ written in the above bases
and $\widehat{M}$ be the flipped matrix of $M$. Then
\[
 \widehat{M}_i = \frac{\partial}{\partial l_i} \bigl( (y_0, \dots , y_n) M  \bigr) , \quad i=0,\dots , m
\]
where $\widehat{M}_i$ is the $i$-th row of $\widehat{M}$. Notice that $\widehat{M}$ is a $\dim V \times \dim W$ matrix.
\end{proposition}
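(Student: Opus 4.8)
The plan is to unwind all the identifications in coordinates and verify that both sides are the same $\dim V \times \dim W$ matrix of linear forms over $\P(W)$, checked entry by entry. First I would fix the bases $e_0,\dots,e_m$ of $V$ and $w_0,\dots,w_n$ of $W$ dual to $l_0,\dots,l_m$ and $y_0,\dots,y_n$, and expand each entry of $M$ as $m_{jk}=\sum_i c^i_{jk} l_i$ with $c^i_{jk}\in\C$. Under the first isomorphism $\Hom(V,W^*\otimes W^*)\cong W^*\otimes W^*\otimes V^*$ (which sends a linear map to $\sum_i (\text{value on } e_i)\otimes l_i$), the matrix $M$ becomes the tensor $T_M=\sum_{i,j,k} c^i_{jk}\, y_j\otimes y_k\otimes l_i$, where the first $W^*$-factor records the row index $j$, the second the column index $k$, and the $V^*$-factor the coefficient.

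Next I would push $T_M$ through the second isomorphism $W^*\otimes W^*\otimes V^*\cong\Hom(W,V^*\otimes W^*)$ and read off $\widehat M$. Here the row-$W^*$ factor (index $j$) plays the role of the domain $W$, while $V^*$ and the column-$W^*$ factor (index $k$) assemble into the codomain $V^*\otimes W^*$. Reading the resulting element of $\Hom(W,V^*\otimes W^*)$ as a matrix of linear forms over $\P(W)$, whose rows are indexed by $i$ and whose columns by $k$, gives entries $\widehat M_{ik}=\sum_j c^i_{jk}\,y_j\in W^*$.

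Finally I would compute the right-hand side directly. The product $(y_0,\dots,y_n)M$ has $k$-th entry $\sum_j y_j m_{jk}=\sum_{i,j} c^i_{jk}\, y_j l_i$, a bilinear form in the $y$'s and the $l$'s; applying $\partial/\partial l_i$ extracts the coefficient of $l_i$, yielding $\sum_j c^i_{jk}\,y_j$ in position $k$. This is exactly $\widehat M_{ik}$, so the $i$-th row of $(y_0,\dots,y_n)M$ differentiated in $l_i$ equals $\widehat M_i$, proving the formula. The one point requiring care — and the only real obstacle — is the bookkeeping of the two $W^*$-factors in the chain of natural isomorphisms: I must confirm that it is the row index $j$ (not the column index $k$) that becomes the polynomial variable of $\widehat M$ and the domain direction of $\Hom(W,V^*\otimes W^*)$, consistently with the definition of the rank of a generalized row. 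Once this convention is pinned down, the agreement of the two coordinate expressions is immediate, since the differentiation $\partial/\partial l_i$ is precisely the coefficient extraction implementing the first isomorphism.
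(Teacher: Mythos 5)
Your proposal is correct and is in substance the same argument as the paper's: the paper simply invokes linearity to reduce to matrices with a single monomial entry $l_i$ (i.e.\ basis tensors of $W^*\otimes W^*\otimes V^*$), which is exactly the coordinate-by-coordinate check $\widehat{m}_{ik}=\sum_j c^i_{jk}y_j$ that you carry out explicitly. Your added care about which $W^*$-factor becomes the domain of $\Hom(W,V^*\otimes W^*)$ (the row index $j$, consistent with the definition of the rank of a generalized row) is the right convention and matches the paper's intent.
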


\begin{proof} 
It suffices to check for matrices $M$ with a single nonzero entry equal to one of the $l_i$, i.e. monomials in $W^* \otimes W^* \otimes V^*$. 
\end{proof}

\begin{example} \label{eFss}
Consider the matrix
\[
M= \left(\begin{smallmatrix}
&  &  & 0 &l _0 &l_1 \\
 &  &  & -l_0 & 0 & l_2  \\
 &  &  & -l_1 & -l_2 & 0 \\
0 &l _0 &l_1 & &  & \\
-l_0 & 0 & l_2  &  &  & \\
-l_1 & -l_2 & 0 &  &  & 
\end{smallmatrix}\right). 
\]
of type $(f)$. We compute the flipped matrix as
\[
	\widehat{M} = \left( \frac{\partial}{\partial l_i} \bigl( (y_0, \dots , y_5) M_f  \bigr) \right)
	= \left(\begin{smallmatrix}
{-{y}_{4}}&{y}_{3}&0&{-{y}_{1}}&{y}_{0}&0\\
      {-{y}_{5}}&0&{y}_{3}&{-{y}_{2}}&0&{y}_{0}\\
      0&{-{y}_{5}}&{y}_{4}&0&{-{y}_{2}}&{y}_{1}
\end{smallmatrix} \right)
\]
It can be checked by a computer algebra calculation \cite{BB22M2} that the rank $\le 1$ locus $Z_1(M)$ defined by the two by two minors of $\widehat{M}$ is empty. Therefore $M$ cannot be of format $\mathrm{NSS_1}$ or $\mathrm{NSS_2}$. The underlying reduced subscheme of $Z_2(M)$ can be computed (see \cite{BB22M2}) to be defined by the two by two minors of 
\[
\begin{pmatrix}
y_0 & y_1 & y_2\\
y_3 & y_4 & y_5
\end{pmatrix}.
\]
This is a Segre embedded $\P^1\times \P^2$ in $\P^5$, i.e. three-dimensional and irreducible and not linear. Therefore $M$ is also not of format $\mathrm{NSS_3}$ and thus $M$ is semi-stable. 
\end{example}

\begin{theorem}
Let $M$ be of type $(a)$ to $(f)$ in Table \ref{tPfaffZero}. Then $M$ has the stability property listed in the last column  of Table \ref{tPfaffZero}.
\end{theorem}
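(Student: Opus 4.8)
The plan is to reduce every assertion to Comaschi's format criterion (Theorem \ref{tComaschi}), supplemented by the two invariants $d_4$ and the generalized-row loci $Z_r$. Semistability is already available: types $(a)$--$(e)$ are semistable by Corollary \ref{cAtoEss} and $(f)$ by Example \ref{eFss}. For the stable cases $(a),(b),(c)$, Theorem \ref{tComaschi} says it suffices to exclude the formats $\mathrm{NS}_1,\mathrm{NS}_2,\mathrm{NS}_3$. By the signatures in Table \ref{tNSSinvariants}, being of format $\mathrm{NS}_i$ forces a linear subspace $\P^{i-1}\subset Z_i$, and the existence of such a subspace is an invariant (up to a linear change of $\P(W)$) of the $\GL_6(\C)$-orbit. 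Since the values $d_4=10,9,8$ of $(a),(b),(c)$ fall below the thresholds of Proposition \ref{pStabilityViaD4}, $d_4$ alone does not decide the matter; so I would compute the flipped matrix $\widehat M$ and the loci $Z_1,Z_2,Z_3$ for each normal form, exactly as in Example \ref{eFss} and with Macaulay2 \cite{BB22M2}, and verify that $Z_1=\emptyset$, that $Z_2$ contains no line, and that $Z_3$ contains no plane. This excludes all three destabilizing formats and yields stability.

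Each of $(d),(e),(f)$ has vanishing upper-left $3\times3$ block, hence is already of the form $\mathrm{NS}_3$; by Theorem \ref{tComaschi} none of them is stable, so $(d)$ and $(e)$ are strictly semistable. To see that $(d)$ and $(e)$ are moreover not polystable, I would degenerate them to $(f)$ via the one-parameter subgroup $\lambda(t)=\mathrm{diag}(t^{-1},t^{-1},t^{-1},t,t,t)\in\mathrm{SL}_6(\C)$. A weight count shows that every entry of the off-diagonal blocks has weight $0$, while the surviving lower-right entries $l_3$ (and $l_4$ in case $(d)$) have weight $2$; hence $\lim_{t\to0}\lambda(t)M\lambda(t)^t$ exists and equals the type $(f)$ matrix, and since all weights are $\ge 0$ the subgroup $\lambda$ does not destabilize, so this limit is a semistable point of the orbit closure. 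As $(f)$ has $d_4=6$, different from $d_4(M)\in\{8,9\}$, it lies in a different $\GL_6(\C)$-orbit (its $\Ytop$ also differs, by Proposition \ref{pOnlyOneNormalForm}); thus the orbits of $(d)$ and $(e)$ are not closed and these matrices are not polystable.

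It remains to prove that $(f)$ is polystable, i.e. that its $\GL_6(\C)$-orbit is closed in the semistable locus, and here I would argue by minimality of $d_4$. Let $M'$ be any semistable matrix in the orbit closure of $(f)$; then $\Pf(M')=0$ and $M'$ is a $6\times6$ matrix of linear forms over $\P^4$. Because $d_4$ is lower semicontinuous and constantly $6$ along the orbit, $d_4(M')\le 6$. By the classification of semistable matrices over $\P^4$ summarised in Table \ref{tPfaffZero}, $M'$ is $\GL_6(\C)$-equivalent to one of the types $(a)$--$(f)$, and $(f)$ is the unique one with $d_4\le 6$ (all others satisfy $d_4\ge 8$). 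Hence $M'$ lies in the orbit of $(f)$: the orbit is closed and $(f)$ is polystable.

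I expect the main obstacle to be the polystability analysis, since the format-and-invariant machinery of the paper detects membership in the destabilizing strata but is silent about orbit closures. The crux is the $d_4$-minimality argument for $(f)$: it hinges on the facts that $d_4$ cannot increase under specialisation and that any semistable degeneration over $\P^4$ is again governed by the classification in Table \ref{tPfaffZero}, which together force every semistable boundary point of the orbit back into the orbit. A secondary, purely computational, difficulty is the verification for $(a),(b),(c)$ that $Z_2$ contains no line and $Z_3$ no plane; this is more delicate than the emptiness check that sufficed for $(f)$ in Example \ref{eFss}, and is where I would expect the bulk of the explicit calculation to lie.
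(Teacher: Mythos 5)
Your handling of semistability (Corollary \ref{cAtoEss} plus Example \ref{eFss}), of the non-polystability of $(d)$ and $(e)$ (your explicit one-parameter subgroup $\lambda(t)=\mathrm{diag}(t^{-1},t^{-1},t^{-1},t,t,t)$ makes concrete the paper's bare assertion that $(f)$ lies in the orbit closures), and of the polystability of $(f)$ (the $d_4$-minimality argument, which is essentially verbatim the paper's) are all fine. The genuine gap is your argument for the stability of $(a)$, $(b)$, $(c)$. The verification you propose --- $Z_1=\emptyset$, no line in $Z_2$, no plane in $Z_3$ --- is not a delicate computation that might work out; it is \emph{false} for these matrices, so the plan cannot succeed. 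Concretely, types $(b)$ and $(c)$ are block diagonal with two skew $3\times 3$ blocks; for any skew $3\times 3$ matrix $N$ of linear forms and any $a\in\C^3$ one has $a^tNa=0$, so the three entries of the row $a^tN$ satisfy a nontrivial constant-coefficient relation and span at most a $2$-dimensional space. Hence every generalized row supported in a single block has rank $\le 2$, and $Z_2$ of type $(b)$ (or $(c)$) contains the two disjoint planes $\P\langle e_0,e_1,e_2\rangle$ and $\P\langle e_3,e_4,e_5\rangle$ (where $e_0,\dots,e_5$ is the standard basis of the space of rows); in particular it contains many lines, and $Z_3$ contains planes. Similarly for type $(a)$: rows $2$ and $5$ have all their entries in $\langle l_1,l_2\rangle$, so the line $\P\langle e_2,e_5\rangle$ lies in $Z_2$, and the plane $\P\langle e_0,e_2,e_5\rangle$ lies in $Z_3$. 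The underlying issue is that the conditions recorded in Table \ref{tNSSinvariants} are only \emph{necessary} for the formats $\mathrm{NS}_i$: format $\mathrm{NS}_2$ actually demands a flag $U_2\subset U_4$ with $u^tMv=0$ for all $u\in U_2$, $v\in U_4$, and $\mathrm{NS}_3$ demands a $3$-dimensional $U$ with $u^tMv=0$ for all $u,v\in U$; these are strictly stronger than having a line in $Z_2$ or a plane in $Z_3$. This is precisely why the paper uses the $Z_r$-invariants only to exclude the \emph{unstable} formats $\mathrm{NSS}_i$ (Example \ref{eFss}), never the formats $\mathrm{NS}_i$.

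The paper closes this gap with no computation at all, and you already have every ingredient: all six matrices in Table \ref{tPfaffZero} are semistable; by Proposition \ref{pSemiStable} a \emph{strictly} semistable matrix with vanishing Pfaffian over $\P^4$ is congruent to one of the normal forms $(d)$, $(e)$, $(f)$; and by Proposition \ref{pOnlyOneNormalForm} the six normal forms are pairwise non-congruent. Hence $(a)$, $(b)$, $(c)$ cannot be strictly semistable and must be stable. If you insist on a direct verification instead, you would have to test for the isotropic flags described above (for each of $(a)$, $(b)$, $(c)$, show there is no $3$-dimensional subspace $U$ with $u^tMv=0$ on $U\times U$, and no flags $U_2\subset U_4$ or $U_1\subset U_5$ annihilated by $M$), which is a genuinely different computation from computing the loci $Z_r$. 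Note finally that your assessment of where the difficulty lies is inverted: the polystability of $(f)$, which you flag as the main obstacle, goes through exactly as in the paper, while the stability of $(a)$, $(b)$, $(c)$, which you call secondary and purely computational, is the one step your toolkit cannot reach.
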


\begin{proof}
By Corollary \ref{cAtoEss} and Example \ref{eFss} all matrices in Table \ref{tPfaffZero} are semistable. If a matrix with vanishing Pfaffian is strictly semistable, then by Proposition \ref{pSemiStable} it must be of type $(d)$, $(e)$ or $(f)$ in Table \ref{tPfaffZero}. Now by Proposition \ref{pOnlyOneNormalForm} the types $(a)-(f)$ are distinct. Consequently types $(a)$, $(b)$ and $(c)$ are stable. 

\medskip 

Types $(d)$ and $(e)$ are not polystable, since type $(f)$ lies in the closure of their orbits.

\medskip

Finally matrices of type $(f)$ are polystable, since $d_4(M)$ is constant on orbits and it can only decrease on the closure. Since $d_4(M)$ is minimal for matrices of type $(f)$ this cannot happen. 
\end{proof}

\section{appendix}

Here we collect some known facts about formats.

\begin{table}[h!]
\begin{tabular}{|c|c|c|}
\hline
matrix & format & action\\
\hline
\begin{tabular}{c}$2 \times 2$ \\ symmetric \end{tabular}
& $\left(\begin{smallmatrix} * &  0\\  0 & 0\end{smallmatrix}\right) $
& $A\mapsto SAS^t$
\\ \hline
$2 \times 2$ 
& $
\left(\begin{smallmatrix} * &  0 \\  * & 0\end{smallmatrix}\right) \quad
\left(\begin{smallmatrix} * & * \\  0 & 0\end{smallmatrix}\right)
$
& $A\mapsto SAT$
 \\ \hline
$3 \times 3$ 
& $
\left(\begin{smallmatrix} * & * & 0 \\  * & * & 0\\ * & * & 0\end{smallmatrix}\right) \quad
\left(\begin{smallmatrix} * & * & * \\  * & * &  *\\ 0 & 0 & 0\end{smallmatrix}\right) \quad
\left(\begin{smallmatrix} * & * & * \\  * & 0 & 0 \\  *& 0 & 0\end{smallmatrix}\right) \quad
\left(\begin{smallmatrix} 0 & * & * \\  * & 0 & * \\ * &*  &0 \end{smallmatrix}\right) 
$
& $A\mapsto SAT$
\\ \hline
\begin{tabular}{c}$4 \times 4$ \\ skew \end{tabular}
& $
\left(\begin{smallmatrix} 0 & * & * & * \\  * & 0& 0 & 0 \\  * & 0& 0 & 0 \\  * & 0& 0 & 0 \end{smallmatrix}\right) \quad
\left(\begin{smallmatrix}  0 & * & * & 0 \\ * & 0 & * & 0 \\ * & * & 0 & 0   \\ 0 & 0 & 0 & 0   \end{smallmatrix}\right).
$
& $A\mapsto SAS^t$
\\ \hline
\end{tabular}
\caption{Formats of matrices of submaximal rank. The last format of the third row and all formats of the last row are skew.}
\label{tKnownFormats}
\end{table}

\begin{theorem}
Let $M$ be a square matrix of linear forms with vanishing determinant. If $M$ is as in the first column of Table \ref{tKnownFormats} then it has one of the formats depicted in the second column of the same row. 
\end{theorem}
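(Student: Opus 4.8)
The plan is to translate each row into a statement about a linear subspace $L\subseteq\mathrm{Mat}_n(\C)$ (landing in $\mathrm{Sym}^2$ or $\Lambda^2$ for the symmetric and skew rows). Writing $M=\sum_i A_i x_i$ in a basis $x_i$ of $\langle M\rangle$, the vanishing of $\det M$ (resp.\ $\Pf M$) says exactly that every matrix in the span $L=\langle A_i\rangle$ is singular (resp.\ isotropic), and the action $M\mapsto SMT$ becomes $L\mapsto SLT$; each format then asserts that, after this action, $L$ lies in a fixed ``format subspace''. For the two $2\times 2$ rows, $\det M\equiv 0$ forces every element of $L$ to have rank $\le 1$, and the elementary rank-one lemma (if $u\tensor x$ and $u'\tensor x'$ have rank one and their sum has rank $\le 1$, then $u\parallel u'$ or $x\parallel x'$) shows $L$ has either a common column vector or a common row covector; after a row resp.\ column operation these are precisely the second and first displayed $2\times 2$ formats. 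The symmetric case is quickest by unique factorisation: from $ac=b^2$ one gets $b=\lambda a,\ c=\lambda^2 a$ (or $a=b=0$), and the congruence $S=\left(\begin{smallmatrix}1&0\\-\lambda&1\end{smallmatrix}\right)$ produces $\mathrm{diag}(a,0)$.

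For the $4\times 4$ skew row I would argue geometrically. Identifying skew $4\times 4$ matrices with $\Lambda^2\C^4\cong\C^6$, the Pfaffian is a nondegenerate quadratic form whose projective zero locus is the Grassmannian $G(2,4)\subseteq\P^5$ of decomposable bivectors, so $\Pf M\equiv 0$ says exactly that $\P(L)$ lies on this quadric, i.e.\ $L$ is isotropic. A nondegenerate form on $\C^6$ has maximal isotropic subspaces of dimension $3$, falling into the two rulings of the quadric by planes: the $\sigma$-planes $v_0\wedge\C^4$ (bivectors containing a fixed vector) and the $\alpha$-planes $\Lambda^2 H$ (bivectors supported on a fixed $3$-dimensional $H$). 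Every isotropic $L$ is contained in a maximal one of one of these families, and after a congruence sending $v_0\mapsto e_0$ (resp.\ $H\mapsto\langle e_0,e_1,e_2\rangle$) the first family kills $m_{12},m_{13},m_{23}$ while the second kills $m_{03},m_{13},m_{23}$, giving precisely the two listed skew formats.

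The $3\times 3$ row is the heart of the matter. Here $L$ is a space of singular (rank $\le 2$) matrices, and I would induct on $r:=\max_{A\in L}\mathrm{rank}\,A$. If $r\le 1$ the rank-one lemma again yields a common column or row direction, landing in the ``row $3$ zero'' or ``column $3$ zero'' format. If $r=2$, I first test the common kernel $\bigcap_A\ker A$ and common cokernel: a nonzero common kernel vector $w$ gives, after $w\mapsto e_2$, a zero last column (format one), and a common cokernel gives a zero last row (format two). The remaining \emph{primitive} case is when both the kernel assignment $A\mapsto\ker A$ and its cokernel analogue, viewed as rational maps $\P(L)\to\P(\C^3)$, are non-constant. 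I would then analyse the image $Y=\overline{\im}$ of the kernel map: when $Y$ (or its cokernel counterpart) is degenerate, lying in a line, the kernels are confined to a fixed plane and one extracts a compression $A(\langle e_1,e_2\rangle)\subseteq\langle e_0\rangle$, i.e.\ the $2\times 2$-zero-block format (format three); when $Y$ spans $\P^2$ the kernels are in general position and a choice of bases diagonalising the three vanishing conditions reaches the zero-diagonal format (format four), the model being the generic skew matrix whose kernel $(c,-b,a)$ sweeps out all of $\P^2$.

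The main obstacle is exactly this primitive $r=2$ subcase for $3\times 3$: upgrading the qualitative dichotomy (degenerate versus spanning kernel image) to the precise normal forms three and four. I expect to carry it out by an explicit reduction --- normalising a rank-$2$ value $M(v_0)$ to $\mathrm{diag}(1,1,0)$ and reading off from the coefficient of $x_0^2 x_j$ in $\det M$ that the $(2,2)$-entry must vanish, then iterating --- of the kind the authors verify symbolically in \cite{BB22M2}; alternatively one may invoke the classification of spaces of matrices of bounded rank (Eisenbud--Harris) to conclude that a primitive space of singular $3\times 3$ matrices is, up to equivalence, either a compression space or the skew model, matching formats three and four.
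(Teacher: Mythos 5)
Your handling of the first, second and fourth rows is correct. For the $4\times 4$ skew row your argument is the paper's argument in slightly different clothing: the paper says that the only linear spaces on $G(2,4)$ are the plane of lines through a fixed point and the plane of lines inside a fixed hyperplane, which are exactly your two rulings by $\sigma$-planes $v_0\wedge\C^4$ and $\alpha$-planes $\Lambda^2 H$; both formulations rest on the fact that a totally isotropic subspace of a nondegenerate quadratic form extends to a maximal one. For the two $2\times 2$ rows the paper is geometric (the only linear spaces on a smooth conic are points; the only linear spaces on a smooth quadric surface are lines of the two rulings), while you use the rank-one lemma and unique factorisation; this is more elementary and equally valid, with the small caveat that for a span with three or more generators the pairwise rank-one lemma needs the standard extra step to produce a single common column direction or common row direction for the whole space.

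The $3\times 3$ row is where you genuinely diverge, and you should know that the paper does not prove this case at all: it cites Testa \cite{Te17}, whose theorem is precisely this classification. Your direct sketch gets the compression cases right --- in particular the degenerate subcase does work: if all kernels lie in a plane $W$, then every restriction $A|_W$ has rank $\le 1$ (a closed condition, so it propagates from the generic element of $L$ to all of $L$), and the rank-one lemma applied to the restrictions yields either a common kernel vector (contradicting primitivity) or a common image line, i.e.\ the compression format. But the spanning subcase is exactly where the content of Testa's theorem lies, and ``diagonalising the three vanishing conditions'' is not a proof: choosing three elements with independent kernels normalises those three matrices, not every matrix in their span. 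Note also that the conclusion needed there is stronger than ``zero diagonal'': per the caption of Table \ref{tKnownFormats}, and as used in Proposition \ref{pHammerFormat} to land in the double skew format $(e)$, the matrix must be equivalent to an honestly skew-symmetric one. Your fallback --- invoking the Atkinson/Eisenbud--Harris classification of spaces of matrices of rank $\le 2$, whose primitive members are, up to equivalence, subspaces of the $3\times 3$ skew-symmetric matrices, all others being compression spaces --- does deliver exactly this and is a legitimate way to close the case, structurally parallel to the paper's citation of Testa. So your proposal is correct provided you lean on that citation; the explicit-reduction alternative you gesture at remains an unproved sketch.
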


\begin{proof}
{\bf 1st row:} The determinant of a symmetric $2 \times 2$ matrix defines a smooth conic in the $\P^2$ of all symmetric $2 \times 2$ matrices. The only linear spaces contained in a smooth conic are points. These correspond to the depicted format.

{\bf 2nd row:} The determinant of a $2 \times 2$ matrix defines a smooth quadric surface in the $\P^3$ of all $2 \times 2$ matrices. The only linear spaces contained in a smooth quadric surface are the lines of the two rulings. These correspond to the two formats.

{\bf 3rd row:} This is a theorem by Testa \cite{Te17}.

{\bf 4th row:} The Pfaffian of a skew-symmetric $4 \times 4$ matrix defines the Grassmannian  $G(2,4)$ in the space of all skew-symmetric $4 \times 4$ matrices. $G(2,4)$ parametrizes lines in a $\P^3$. The only linear spaces in $G(2,4)$ are 
\begin{itemize}
\item For each point in $P \in \P^3$ the $\P^2$ of lines containing this point
\item For each hyperplane $H \in \P^3$ the $\P^2$ of lines contained in $H$
\end{itemize}
These two types correspond to the two formats depicted in the last row.

\end{proof}

\providecommand{\bysame}{\leavevmode\hbox to3em{\hrulefill}\thinspace}
\providecommand{\href}[2]{#2}

\end{document}